\documentclass[10pt]{article}
\usepackage{times}
\usepackage{amsmath,amsfonts,amstext,amssymb,amsbsy,amsopn,amsthm,eucal,slashed,amstext,enumerate,accents}
\usepackage{color}
\usepackage{txfonts}
\usepackage{dsfont}
\usepackage{hyperref}
\usepackage{graphicx}   
\usepackage[colorinlistoftodos]{todonotes}

\numberwithin{equation}{section}
  \theoremstyle{plain}
 \newtheorem{theorem}{Theorem}
 
  \newtheorem{theoremletter}{Theorem}

\newtheorem*{convention}{\textbf{Convention}}
 \newtheorem{lemma}{Lemma}
 \newtheorem{corollary}{Corollary}

 \newtheorem{remark}{Remark}

\theoremstyle{definition}

\def \no{\nonumber}

\newcommand{\Ric}{\mathrm{Ric}}

\newcommand{\diam}{{\rm diam}}

\newcommand{\R}{{ \mathrm R}}

\newcommand{\pa}{\partial}
\newcommand{\Rm}{\mathrm{Rm}}

\newcommand{\Sn}{\mathbb{S}^n}
\newcommand{\Sp}{\mathbb{S}}

\newcommand{\fR}{\mathfrak{R}}

\newcommand{\tr}{\mathrm{tr}}
\newcommand{\Lap}{\Delta}

\newcommand{\Vol}{\mathrm{Vol}}
\newcommand{\RR}{\mathbb{R}}
\newcommand{\Rn}{\mathbb{R}^n}

\newcommand{\ud}{\,\mathrm{d}}
\newcommand{\md}{\mathrm{d}}
\newcommand{\RP}{\mathbb {RP}}
\newcommand{\ve}{\varepsilon}

\allowdisplaybreaks

\title{A sharp diameter-mass inequality on Einstein manifolds and ADM mass}
\author{Xuezhang Chen\thanks{$^\dag$$^\S$ X. Chen, J. Gan and N. Wu are partially supported by NSFC (No.11771204). Email: xuezhangchen@nju.edu.cn.},~ Jiaxue Gan\thanks{J. Gan's Email: 602025210004@smail.nju.edu.cn.},~Yalong Shi\thanks{Y. Shi is partially supported by NSFC (No.12371058). Email:shiyl@nju.edu.cn.}~ and Nan Wu\thanks{N. Wu's Email: nwu2@nju.edu.cn}\\
		{\small $^{\ast}$$^{\dag}$$^{\ddag}$$^\S$School of Mathematics \& IMS, Nanjing University, Nanjing 210093, P.R. China }}
\date{}

\begin{document}
\maketitle

\begin{abstract}
Let $(M^n,g), n\geq 4$ be a close Einstein manifold with positive scalar curvature and $G$ be the Green function of the conformal Laplacian. The ADM mass of the asymptotically flat metric $\hat g:=G^{4/(n-2)} g$ is an important geometric invariant. When either $n=5$ or  $n\geq 6$ with  Weyl vanishing condition, we prove a mass formula, generalizing Gursky-Malchiodi's formula in dimension $4$. An important step is an extension of Viaclovsky's Green function expansion in higher dimensions. Next we prove a sharp diameter-mass inequality on Einstein manifolds. Finally we present two applications. First, we prove a mass gap theorem for Einstein manifolds. Second, for Collins-Sz\'ekelyhidi's family of Sasaki-Einstein metrics $g_i$ on $\Sp^5$ with volume going to zero,  the sequence of the masses $m(\hat{g_i})$ blows up uniformly as $i\to\infty$.

\medskip

{\bf MSC2020}: 53C18, 53C24, 53C25. 
\end{abstract}
\medskip

{\footnotesize \tableofcontents}

\section{Introduction}

Let $(N^n,g)$ be an asymptotically flat manifold of  order $\tau>0$, in the sense that there exists a compact set $K\subset N$ such that  $N\setminus K$ is diffeomorphic to $ \mathbb{R}^n \backslash B_R(0)$ for some $R>0$, and under asymptotic coordinate chart, $g-\delta \in O_{2}(|y|^{-\tau})$ as $|y| \to \infty$.   The ADM mass of $(N^n,g)$ was defined by Arnowitt-Deser-Misner \cite{ADM} in the 1960s in the context of general relativity: 
\begin{equation}\label{Def:ADM_mass}
m(g):=\frac{1}{|\Sp^{n-1}|}\lim_{t \to \infty} \int_{\Sp_t^{n-1}}g^{\beta \gamma}(\pa_\gamma g_{\alpha \beta}-\pa_\alpha g_{\beta \gamma})\nu^\alpha \ud\sigma
\end{equation}
if the limit exists.  In \cite{bartnik}, Bartnik proved that when the asymptotic order $\tau>\frac{n-2}{2}$ and the scalar curvature is integrable, the mass is independent of the choice of asymptotic coordinates and is thus a geometric invariant for $(N,g)$.

For a closed Riemannian manifold $(M^n,g),n\geq 3$ with positive Yamabe constant,  the conformal Laplacian $L_g=-\Delta_g +\frac{n-2}{4(n-1)}R_g$ is related to the ADM mass in the following way: Let $G$ denote the Green function of $L_g$ with pole at some $\mathsf{p} \in M$ such that $L_g G=(n-2)|\Sp^{n-1}|\delta_{\mathsf{p}}$ in the distribution sense, then $(M\setminus\{\mathsf{p} \},\hat g:=G^{\frac{4}{n-2}}g)$ is a scalar-flat complete manifold, and of asymptotic order $1+\lfloor \frac{n-2}{2}\rfloor$ when $n=3,4,5$, and order $n-2$ when $(M,g)$ is locally conformally flat manifold and $n\geq 6$,  so the mass is well-defined. This fact plays a decisive role in the resolution of Yamabe problem; see \cite{Lee-Parker} for a survey.  Stimulated by this fact,  in Chen-Shi \cite{Chen-Shi} and Chen-Gan-Shi \cite{CGS}, the authors established the Green function rigidity conjecture for the conformal Laplacian/the Paneitz operator by using the Positive Mass/Energy Theorem, respectively.

Another closely related topic is the mass of an Einstein $4$-manifold. Viaclovsky \cite{Viaclovsky} initiated the study of the Green function expansion of $L_g$ on Einstein $4$-manifolds, and gave explicit representations of ADM mass for warped product manifolds $\Sp^2 \times \Sp^2, \RP^2 \times \RP^2$, etc.   
Recently, Gursky-Malchiodi \cite[Theorem 1.1]{GM} proved a nice mass formula on Einstein $4$-manifolds related to the Green function of $L_g$, subsequently under appropriate topology conditions the classification of Einstein $4$-manifolds with positive sectional curvature was independently studied by L. Cheng \cite{ChengL} and Gursky-Malchiodi \cite{GM2026}.

\begin{convention}
Throughout the paper, let  $r(\cdot)=\mathrm{d}_g(\cdot,\mathsf{p})$, $V=\mathrm{Vol}(M,g)$ and $\lambda_n=n(n-2)/4$. Write $\widehat M=M \setminus\{\mathsf{p}\}$ and  $\hat g=G^{4/(n-2)} g=\varphi^2 g$.
\end{convention}

\begin{theoremletter}[Gursky-Malchiodi]
Let $(M^4,g)$ be an Einstein manifold with $\Ric=3 g$. Let $G$ denote the Green function of $L_g=-\Delta_g +2$ with pole at $\mathsf{p}$. Then 
\begin{equation*}
F:=G^{-1} (1 + \vert \hat{\nabla} G \vert^2) -4 \leq 0 \qquad \mathrm{in~~} \widehat M
\end{equation*}
and the ADM mass $m(\hat g)$ is given by
\begin{align*}
8 |\Sp^3| m(\hat g)=6(|\Sp^4|-V)+\int_{\widehat M} |\hat \nabla F|^2 \ud \mu_{\hat g}+2\int_{\widehat M} G^{-2}|\hat \nabla G|^2 \vert \mathring{\hat{\nabla}}^2G \vert^2\ud \mu_{\hat g}.
\end{align*}
\end{theoremletter}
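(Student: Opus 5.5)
We want to prove the Gursky--Malchiodi mass formula. The plan is to work entirely on $(\widehat M,\hat g)$ with $\hat g=G^2 g$ (here $n=4$, so $\varphi=G$), and use the standard conformal change formulas. Since $\Ric_g=3g$ and $\hat g$ is scalar-flat, we get
\[
\widehat{\Ric}=\Ric_g-2G^{-1}\nabla^2G+4G^{-2}dG\otimes dG-\big(G^{-1}\Delta G+G^{-2}|\nabla G|^2\big)g .
\]
It is convenient to rewrite everything in terms of $u:=G^{-1}$, because $g=u^2\hat g$ and $u$ vanishes at infinity like $|y|^{-2}$. The relation between $g$ and $\hat g$ then reads
\[
\widehat{\Ric}=-2u^{-1}\hat\nabla^2 u+\big(3u^{-2}(1-|\hat\nabla u|^2)-\ldots\big)\hat g .
\]
Taking the trace and using $\hat R=0$ gives an ODE-type identity $\hat\Delta u = 4u^{-1}(1-|\hat\nabla u|^2)\cdot c$ for an explicit constant. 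I will recompute this precisely.

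In these variables $F=G^{-1}(1+|\hat\nabla G|^2)-4$ becomes a quadratic expression in $u$ and $|\hat\nabla u|$. Note $|\hat\nabla G|^2=G^{-2}|\nabla G|^2$.

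\textbf{Key step 1 (Bochner).} Compute $\hat\Delta F$ using the Bochner formula for $\hat g$ and the expression for $\widehat{\Ric}$ above. The Einstein condition is what allows the Ricci term to be expressed by $\hat\nabla^2 G$ alone. The goal is an identity of the form
\[
\operatorname{div}_{\hat g}\big(a(G)\hat\nabla F\big)= b(G)|\hat\nabla F|^2+c(G)|\hat\nabla G|^2|\mathring{\hat\nabla}^2G|^2 ,
\]
with nonnegative coefficients. One should check that the weights line up to give exactly $1$ and $2G^{-2}$ after suitable normalization; this is the analogue of the Obata / $P$-function computation.

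\textbf{Step 2 ($F\le 0$).} This follows from the maximum principle applied to the previous identity. $F$ is subharmonic for a weighted operator, so it suffices to control $F$ at infinity and near the set where the expansion is valid. Using the expansion $G=r^{-2}(1+O(r^2))$ near $\mathsf p$, valid for Einstein $4$-manifolds by Viaclovsky's expansion with constant term $A$, one finds $F\to0$ at the end. At infinity this gives $\limsup F\le0$, hence $F\le0$.

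\textbf{Step 3 (integrate and read off the mass).} Integrate the divergence identity over $\{G\le t\}$, i.e. the exterior region cut at large radius, and let $t\to\infty$.
\begin{itemize}
\item The boundary term at infinity involves $\hat\nabla F$. Via the expansion $G=r^{-2}+A+O(r)$ it produces $8|\Sp^3|m(\hat g)$, since $m(\hat g)$ is proportional to $A$.
\item The remaining bulk term arises from $\int 1\,d\mu_g=V$ versus the model. More precisely, one adds the divergence of a second vector field (for instance $\hat\nabla G$ or $G\hat\nabla G$) whose integral, by the equation $L_gG=0$, equals $6V$ up to boundary terms.
\item The $|\Sp^4|$ contribution comes from the boundary term of that field. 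It equals the value on the round sphere, where $F\equiv0$ and the identity is sharp.
\end{itemize}

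\textbf{Main obstacle.} The hardest part is the exact bookkeeping in Step 3. The boundary integrals must converge and must give precisely $8|\Sp^3|m$ and $6|\Sp^4|$, which requires the Green function expansion to order $O(r^{1})$ with derivatives. I would first check all constants on the round $\Sp^4$, where $m=0$, $V=|\Sp^4|$ and $F\equiv0$, in order to calibrate them.
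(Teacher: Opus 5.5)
\textbf{There is a genuine gap.} Your outline has the right ingredients, but the central identity is misidentified. As a consequence you never find where the term $6(|\Sp^4|-V)$ comes from.

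\textbf{Step 1 targets an identity that cannot hold.} Since $P_g=\frac12 g$ and $\hat R=0$, the conformal change of the Schouten tensor gives $\hat P=-G^{-1}\mathring{\hat\nabla}^2G$. Hence $\hat\nabla F=-2\hat P(\hat\nabla G,\cdot)=2G^{-1}\mathring{\hat\nabla}^2G(\hat\nabla G,\cdot)$. Because $\hat P$ is divergence free, $\hat\Delta F=2G^{-1}|\mathring{\hat\nabla}^2G|^2\ge 0$. This already gives Step 2, since $F=-m(\hat g)\rho^{-2}+o(\rho^{-2})$ at infinity. But it also shows that
\[
\operatorname{div}_{\hat g}(a(G)\hat\nabla F)=a'(G)\langle\hat\nabla G,\hat\nabla F\rangle+2a(G)G^{-1}|\mathring{\hat\nabla}^2G|^2
\]
contains an indefinite cross term and a term without the factor $|\hat\nabla G|^2$. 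So the computation cannot produce an identity of the shape you aim for. The multiplier that works is $F+4=G^{-1}(1+|\hat\nabla G|^2)$, which is not a function of $G$ alone, and it leaves a third term:
\[
\tfrac12\hat\Delta(F^2+8F)=|\hat\nabla F|^2+2G^{-2}|\hat\nabla G|^2|\mathring{\hat\nabla}^2G|^2+G^{-1}\hat\Delta F .
\]
The boundary term at infinity is $8|\Sp^3|m(\hat g)$. So the theorem is equivalent to $\int_{\widehat M}G^{-1}\hat\Delta F\ud\mu_{\hat g}=6(|\Sp^4|-V)$, which is the real content of the formula.

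\textbf{Step 3 does not supply this.} ``Add the divergence of $\hat\nabla G$ or $G\hat\nabla G$'' does not say what is being cancelled. Calibrating on the round sphere is vacuous: there $m=0$, $F\equiv0$, $V=|\Sp^4|$, and every term vanishes, so it cannot fix the constant $6$.

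\textbf{How to close the gap.} Use the correct form of your ``ODE-type identity'', namely $\hat\Delta G^{-1}=-2G^{-3}$, which is $L_gG=0$ rewritten. Your version $\hat\Delta u=c\,u^{-1}(1-|\hat\nabla u|^2)$ is wrong. In your $\widehat{\Ric}$, the Hessian term should be $+2u^{-1}\hat\nabla^2u$ and the Einstein term $3u^2\hat g$.
\begin{enumerate}
\item Integrate by parts. The boundary terms are $O(\rho^{-2})$ because $G^{-1},F=O(\rho^{-2})$. Using $\ud\mu_{\hat g}=G^4\ud\mu_g$, this gives $\int_{\widehat M}G^{-1}\hat\Delta F\ud\mu_{\hat g}=-2\int_MFG\ud\mu_g$.
\item Write $FG=1+|\nabla\log G|_g^2-4G$.
\item Integrating $L_gG=2|\Sp^3|\delta_{\mathsf p}$ gives $\int_MG\ud\mu_g=|\Sp^3|$.
\item Integrating $\Delta_g\log G=2-|\nabla\log G|^2$ over $M\setminus B_\varepsilon(\mathsf p)$, where the boundary term is $O(\varepsilon^2)$, gives $\int_M|\nabla\log G|^2\ud\mu_g=2V$.
\item Hence $-2\int_MFG\ud\mu_g=8|\Sp^3|-6V=6(|\Sp^4|-V)$, using $4|\Sp^3|=8\pi^2=3|\Sp^4|$.
\end{enumerate}
This is the $n=4$ case of the paper's scheme in Section~\ref{sec:mass_formula}: \eqref{Lap_F}, \eqref{Lap_Aux-fcn}, integration by parts via \eqref{eq:Lap_G-inverse}, and the identity \eqref{eq:volume gap}. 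Its weight $H$ is identically $H(1/4)=1/3$ when $n=4$, because the derivative computed in the proof of Lemma~\ref{lem:key-est} carries the factor $4-n$.
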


Inspired by Gursky-Malchiodi's work,  in this paper we continue to explore such a mass formula on Einstein manifolds of dimension 5 and higher. However, such a mass formula on $n$-dimensional Einstein manifolds with $n\geq 6$ cannot be unconditionally valid. So, it is natural for us to impose the `\emph{Weyl vanishing condition}' :\footnote{The \emph{Weyl vanishing condition} was also used in the convergence of the Yamabe flow by Brendle \cite{Brendle} in dimensions $n\geq 6$.}
\begin{equation}\label{cond:Weyl-vanishing}
n \geq 6 \quad \mathrm{and} \quad \limsup_{x \to \mathsf{p}} \ud_g(x,\mathsf{p})^{2-\lfloor\frac{n-2}{2}\rfloor}|W_g(x)|=0.
\end{equation}
Our first main theorem extends Gursky-Malchiodi mass formula to higher dimensional Einstein manifolds.
\begin{theorem}\label{Thm:G-M type mass formula}
Let $(M^n,g), n \geq 5$ be an Einstein manifold with $\Ric=(n-1) g$. Let $G$ denote the Green function of $L_g=-\Delta_g +\lambda_n$ with pole at $\mathsf{p}$. Assume  either $n=5$ or $n\geq 6$ and  the Weyl vanishing condition \eqref{cond:Weyl-vanishing}  is true. Then
\begin{equation}\label{est:lbd_F}
F:=\varphi^{-1} (1 + \vert \hat{\nabla} \varphi \vert^2) -4 \leq 0 \qquad \mathrm{in~~} \widehat M
\end{equation}
and the ADM mass $m(\hat g)$ is given by
\begin{equation}\label{G-M mass identity}
8\vert \mathbb{S}^{n-1}\vert m(\hat g)
=\int_{\widehat M} \varphi^{-1}\hat \Delta F \ud \mu_{\hat g}+\int_{\widehat M} |\hat \nabla F|^2 \ud \mu_{\hat g}+2\int_{\widehat M} \varphi^{-2}|\hat \nabla \varphi|^2 \vert \mathring{\hat{\nabla}}^2\varphi \vert^2\ud \mu_{\hat g}
\end{equation}
with $\hat \Delta F\geq 0$ in $\widehat M$.
Moreover, there exists a dimensional constant $\alpha_n>0$ such that
\begin{equation}\label{lbd-mass:volume gap}
m(\hat g)\geq \alpha_n (|\Sn|-V).
\end{equation}
\end{theorem}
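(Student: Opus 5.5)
We follow Gursky--Malchiodi's four-dimensional strategy in three stages: a Bochner-type identity for $F$, boundary analysis at infinity via the Green function expansion, and the volume gap.

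\textbf{Step 1: a Bochner identity and the sign of $\hat\Delta F$.} Since $\hat g=\varphi^2 g$ with $\varphi=G^{2/(n-2)}$ and $\Ric_g=(n-1)g$, the conformal change formula gives
\[
\Ric_{\hat g}=-(n-2)\varphi^{-1}\hat\nabla^2\varphi+\varphi^{-2}\bigl(n-1-\dots\bigr)\hat g
\]
up to explicit terms involving $|\hat\nabla\varphi|^2$. Scalar flatness $R_{\hat g}=0$ then forces a linear relation between $\hat\Delta\varphi$ and $\varphi^{-1}(1+|\hat\nabla\varphi|^2)$, which is where the quantity $F$ naturally appears. Using this relation, I would compute $\hat\nabla F$ in terms of $\mathring{\hat\nabla}^2\varphi(\hat\nabla\varphi,\cdot)$, and then compute $\hat\Delta F$ using the Bochner formula on $(\widehat M,\hat g)$. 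The Ricci term in the Bochner formula is again expressed through $\hat\nabla^2\varphi$, so everything reduces to $\varphi$.

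I expect the outcome to be an identity of the form
\[
\varphi\hat\Delta F=c\,|\mathring{\hat\nabla}^2\varphi|^2+\langle X,\hat\nabla F\rangle,
\]
with $c>0$ and $X$ an explicit vector field (possibly arranged instead as $\operatorname{div}(\varphi\hat\nabla F)\ge 0$). Gursky--Malchiodi's $4$-dimensional computation should generalize verbatim, with only the constants depending on $n$. This yields the differential inequality needed for the maximum principle.

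\textbf{Step 2: boundary values of $F$ and the maximum principle.} I would use the Green function expansion at $\mathsf p$, namely the extension of Viaclovsky's expansion announced in the abstract, assumed established earlier in the paper. In inverted coordinates $y=x/|x|^2$ this expansion should give
\[
\varphi=|y|^2+O(\cdots) \quad\text{and}\quad F\to 0 \quad\text{as } |y|\to\infty.
\]
More precisely, I expect
\[
F=-c\,m\,|y|^{2-n}+o(|y|^{2-n}),
\]
since the mass term is the first correction to Euclidean behavior. This is exactly where the hypotheses enter. For $n=5$ the decay order $1+\lfloor\frac{n-2}{2}\rfloor$ suffices, while for $n\ge6$ the Weyl vanishing condition \eqref{cond:Weyl-vanishing} kills the curvature terms of order between $|y|^{-2}$ and $|y|^{2-n}$. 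Controlling these intermediate terms is the step I expect to be the main obstacle.

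Given the boundary behavior, the strong maximum principle applied to the inequality from Step 1 shows that $F$ has no interior positive maximum. Since $F\to0$ at infinity, this gives \eqref{est:lbd_F}.

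\textbf{Step 3: the mass formula.} Multiply the identity from Step 1 by a suitable weight; the natural choice is $\varphi^{-1}$, or equivalently integrate $\operatorname{div}(\varphi^{-1}\hat\nabla F)$. Then integrate over $\{|y|\le t\}$. The boundary flux $\int_{\Sp_t}\varphi^{-1}\partial_\nu F$ converges to $8|\Sp^{n-1}|m(\hat g)$ by the asymptotics of Step 2, which is where the constant $8$ comes from. Meanwhile, the integrand splits into the three nonnegative terms appearing in \eqref{G-M mass identity}, with $\hat\Delta F\ge0$ following from Step 1 together with \eqref{est:lbd_F}.

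\textbf{Step 4: the volume gap.} To obtain \eqref{lbd-mass:volume gap}, note that $F\le0$ means
\[
|\hat\nabla\varphi|^2\le 4\varphi-1.
\]
By the mass formula,
\[
m(\hat g)\ge c\int_{\widehat M}\varphi^{-1}\hat\Delta F\ud\mu_{\hat g}.
\]
I would bound this integral from below by $|\Sn|-V$ by rewriting the volume of $(M,g)$, which equals $\int_{\widehat M}\varphi^{-n}\ud\mu_{\hat g}$, via a coarea integration over level sets of $\varphi$. The comparison point is the sphere, where $F\equiv0$. The gradient bound controls the growth of the level set volumes, so $|\Sn|-V$ is dominated by an integral of $-F$, and hence by the mass through a Hölder/Poincaré-type estimate with a dimensional constant $\alpha_n$. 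This last comparison is delicate, and I would first try a direct monotonicity argument along the level sets of $\varphi$.
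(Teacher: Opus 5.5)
Your Step 3 does not produce \eqref{G-M mass identity}. Since $\varphi\sim|y|^2$ and $F=-\frac{2}{n-2}m(\hat g)|y|^{2-n}+o(|y|^{2-n})$ (with the matching derivative bound), the flux of $\hat\nabla F$ through $\{|y|=t\}$ tends to $2|\Sp^{n-1}|m(\hat g)$. Integrating $\hat\Delta F$ itself therefore gives the factor $2$ instead of $8$, and only one term on the right. Integrating $\operatorname{div}(\varphi^{-1}\hat\nabla F)$ is worse: its flux is $O(t^{-2})\to 0$, and the integrand $\varphi^{-1}\hat\Delta F-\varphi^{-2}\langle\hat\nabla\varphi,\hat\nabla F\rangle$ is unsigned and contains neither $|\hat\nabla F|^2$ nor the third term. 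The missing device is to integrate
\[
\tfrac12\hat\Delta(F^2+8F)=|\hat\nabla F|^2+(F+4)\hat\Delta F,\qquad F+4=\varphi^{-1}\bigl(1+|\hat\nabla\varphi|^2\bigr)\to 4 .
\]
Its flux is $4\cdot 2|\Sp^{n-1}|m(\hat g)$, which is where the $8$ comes from. Its weight splits as $\varphi^{-1}+\varphi^{-1}|\hat\nabla\varphi|^2$, giving exactly the first and third terms. That splitting, and the claim $\hat\Delta F\ge 0$, require the exact identity $\hat\Delta F=2\varphi^{-1}|\mathring{\hat\nabla}^2\varphi|^2$ with no drift term. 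With an unknown $\langle X,\hat\nabla F\rangle$, knowing $F\le 0$ would not give $\hat\Delta F\ge 0$. The clean derivation goes through the Schouten tensor. The transformation law together with $\hat\Delta\varphi=\frac n2\varphi^{-1}(1+|\hat\nabla\varphi|^2)$ gives $\hat P=-\varphi^{-1}\mathring{\hat\nabla}^2\varphi$, hence $\hat\nabla F=-2\hat P(\hat\nabla\varphi,\cdot)$. Then $\operatorname{div}\hat P=0$, since $\hat R=0$, yields the identity.

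Your Step 4 is only a heuristic, and a H\"older or Poincar\'e estimate is not what closes it. Two exact ingredients are missing.

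\textbf{Linking the mass term to $-FG$.} You need to tie $\int_{\widehat M}\varphi^{-1}\hat\Delta F\ud\mu_{\hat g}$ to $\int_M(-FG)\ud\mu_g$. Since $F\le0$ forces $\varphi\ge\frac14$, you have $\varphi^{-1}=G^{-1}\varphi^{\frac{n-4}{2}}\ge 2^{4-n}G^{-1}$. Integrating by parts against $G^{-1}$, which satisfies $-\hat\Delta G^{-1}=\lambda_n G^{-\frac{n+2}{n-2}}$, gives $\int_{\widehat M}G^{-1}\hat\Delta F\ud\mu_{\hat g}=\lambda_n\int_M(-FG)\ud\mu_g$.

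\textbf{Bounding the volume gap by $-FG$.} You need a test function modeled on the sphere. Let $\Phi$ solve $\Phi''+\frac{n-1}{\rho}\Phi'=(\rho^2+\frac14)^{-n}$, viewed as a function of $\varphi=\rho^2+\frac14$. Using $|\hat\nabla\varphi|^2=(F+4)\varphi-1$, one finds $\hat\Delta\Phi(\varphi)=\varphi^{-n}+K(\varphi)F$. Integrating over $\widehat M$, the flux at infinity is $|\Sn|$ and $\int\varphi^{-n}\ud\mu_{\hat g}=V$. This gives the identity $|\Sn|-V=\int_M(-FG)H(\varphi)\ud\mu_g$, and one checks that $H$ is decreasing on $[\frac14,\infty)$ for $n\ge 5$, so $H\le H(\frac14)$.

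Your coarea picture points in this direction, since $|\Sn|-V$ is indeed controlled by $-F$. As written, though, it neither produces such an identity nor links $\int\varphi^{-1}\hat\Delta F$ to $\int(-F)$, so \eqref{lbd-mass:volume gap} does not follow.
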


For a closed manifold $(M^n,g), n\geq 3$ with $\Ric \geq (n-1) g$, Manea \cite{Manea} established the optimal gradient estimate \eqref{est:lbd_F} by adapting an approach by Colding \cite{Colding} for nonparabolic manifolds with nonnegative Ricci curvature. 

\medskip

To state our second main theorem of the sharp diameter-mass inequality on Einstein manifolds,  we introduce some notations.

The volume of a geodesic ball on $\Sn$ of radius $t$ is
\begin{equation}\label{eq:modelvolume}
 V_n(t)=|\Sp^{n-1}|\int_0^t\sin^{n-1}s\ud s,\qquad  t \in (0,\pi].
\end{equation}
Then there exists a unique real number $\mathfrak R=\fR(M,g)\in(0,\pi]$ such that
\begin{equation}\label{eq:volumeradius}
 V_n(\fR)=V.
\end{equation}
By Bishop-Gromov volume comparison theorem and the diameter estimate due to Myers \cite{Myers} and S. Y. Cheng \cite{Cheng} , we know that 
$$\fR\leq \mathrm{diam}(M,g) \leq \pi.$$

Define 
$$\sigma(t) = \Big(2\sin (\frac{t}{2})\Big)^{2-n}, \qquad t \in [0,\fR]$$
and its companion function by
$$\zeta_(t):=\sigma(\pi-t)=\Big(2\cos (\frac{t}{2})\Big)^{2-n}.$$
 Geometrically, if we regard $t$ as the distance function to $\mathsf p$ on $\Sn$, then $\sigma(t)$ is exactly the Green function of $L_{\Sn}$, precisely,
\begin{equation}\label{eq:Green-fcn-sphere}
-\sigma''-(n-1)\cot t \sigma'+\lambda_n \sigma=0 \qquad \mathrm{for~~} t\in (0,\pi).
\end{equation}
Clearly, $\zeta(t)$ is the Green function with pole at $-\mathsf p$, and satisfies \eqref{eq:Green-fcn-sphere} as well. 

The Bishop-Gromov volume comparison theorem implies that a local volume gap
 \begin{equation}\label{eq:volumedeficit}
 D(t):=V_n(t)-V_{\mathsf p}(t) \geq 0,\qquad  t \in [0,\fR],
\end{equation}
where $V_{\mathsf p}(t)=\Vol_g(B_t(\mathsf p))$.

Let
\begin{equation} \label{eq:fR}
\begin{aligned}
 f_{\fR}(t)=\sigma(t)+k_{\fR} \zeta(t) \qquad \mathrm{~~with~~} k_{\fR}=\cot^n(\fR/2), \quad 0<t\le \fR.
\end{aligned}
\end{equation}
Also $f_{\fR}$ solves \eqref{eq:Green-fcn-sphere} in $(0,\pi)$.
 Extend $C^1$-continuously the function $f_{\fR}$  to $\widehat M$ by a constant
outside $B_{\fR}({\mathsf p})$:
\begin{equation}\label{eq:FRw}
 F_{\fR}(x)=
 \begin{cases}
 f_{\fR}(r(x)),&r(x)\le \fR,\\
 f_{\fR}(\fR),&r(x)>\fR,
 \end{cases}
\end{equation}
and set $w=G-F_{\fR}$. 

Let
$$E[u]=\int_M \bigl( |\nabla u|^2+\lambda_n u^2\bigr) \ud \mu_g, \quad u \in H^1(M,g)$$
denote the Yamabe energy on $(M,g)$, and define a radial Yamabe energy density by
\begin{equation}\label{eq:energy-density}
 e_{\fR}(t)=f_{\fR}'(t)^2+\lambda_n f_{\fR}(t)^2.
 \end{equation}
 
 \begin{theorem}\label{thm:diameter-mass inequality}
Let $(M^n,g), n \geq 4$ be an Einstein manifold with $\Ric=(n-1) g$. Let $G$ denote the Green function of $L_g=-\Delta_g +\lambda_n$ with pole at $\mathsf{p}$. Assume that  the Weyl vanishing condition \eqref{cond:Weyl-vanishing}  is true.
Then $w\in H^1(M,g)$, and
\begin{equation}\label{eq:capidentity} 
 \frac{n-2}{4(n-1)}|\Sp^{n-1} |\left(m(\hat g)-(n-1)2^{4-n}\cot^n\frac{\fR}{2}\right)\\
 =-\int_0^{\fR} D(t)e_{\fR}'(t)\ud t
       +E[w]
\end{equation}
and $D(t)\ge0, e_{\fR}'(t)<0$ for $0<t<\fR$. In particular, we have
\begin{equation}\label{eq:massdiam}
 m(\hat g)\ge (n-1)2^{4-n}\cot^n(\fR/2)\ge (n-1)2^{4-n}\cot^n(\diam(M,g)/2),
\end{equation}
with equality  if and only if $(M, g)$ is isometric to either
the unit round  sphere  $\Sn$ or the round real projective space $\RP^n$.
\end{theorem}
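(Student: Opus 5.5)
We combine an energy (capacity) identity for $G$ with the coarea formula, in the spirit of Bray-type comparisons. The starting point is the expansion of $G$ near $\mathsf p$. Under the Weyl vanishing condition (or $n=4,5$), we use the extension of Viaclovsky's expansion established earlier: in normal coordinates
\[
G=r^{2-n}\bigl(1+O(r^{4})\bigr)+A+o(1),\qquad A=\tfrac{n-2}{4(n-1)}\cdot(\text{normalisation})\,m(\hat g)\cdot 2^{\ast},
\]
with derivative bounds of the same orders. The constant $A$ is, up to the dimensional factor displayed in \eqref{eq:capidentity}, the mass. Near $\mathsf p$ we have $f_{\fR}(r)=\sigma(r)+k_{\fR}\zeta(r)$, and $\sigma(r)=r^{2-n}(1+O(r^2))$. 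Since the Einstein metric satisfies $\Vol(\partial B_r)=|\Sp^{n-1}|\sin^{n-1}r\,(1+O(r^{4}))$, the singular parts of $G$ and $F_{\fR}$ cancel to the required order. This gives $w=A-2^{2-n}k_{\fR}+O(r)$ near $\mathsf p$, hence $w\in H^1$. Keeping track of these constants, namely $2^{2-n}k_{\fR}$ from $\zeta(0)$ and the $(n-1)2^{4-n}$ factor, is what produces the term $(n-1)2^{4-n}\cot^n(\fR/2)$.

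Next we expand $E[w]=E_\varepsilon[G]-2E_\varepsilon[G,F_{\fR}]+E_\varepsilon[F_{\fR}]$ on $M\setminus B_\varepsilon(\mathsf p)$ and let $\varepsilon\to0$.
\begin{itemize}
\item For the two terms involving $G$, we integrate by parts using $L_gG=0$ away from $\mathsf p$. This reduces them to boundary integrals over $\partial B_\varepsilon$, whose divergent parts cancel against those of $E_\varepsilon[F_{\fR}]$. The finite remainders are expressed in terms of $A$ and of the value and derivative of $f_{\fR}$.
\item For $E[F_{\fR}]$ we use the coarea formula with $|\nabla r|=1$ a.e.:
\[
\int_{B_{\fR}\setminus B_\varepsilon} e_{\fR}(r)\ud\mu=\int_\varepsilon^{\fR} e_{\fR}(t)\,V_{\mathsf p}'(t)\ud t ,
\]
while outside $B_{\fR}$ the integrand is $\lambda_n f_{\fR}(\fR)^2$, contributing $\lambda_n f_{\fR}(\fR)^2\,(V-V_{\mathsf p}(\fR))$.
\item We write $V_{\mathsf p}=V_n-D$ and integrate by parts in $t$. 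The $V_n$ part gives exactly the round-sphere quantity, which we compute using the ODE \eqref{eq:Green-fcn-sphere}. The $D$ part gives $-\int_0^{\fR}D\,e_{\fR}'\ud t$ plus the boundary term $D(\fR)e_{\fR}(\fR)$. Since $V_n(\fR)=V$, we have $D(\fR)=V-V_{\mathsf p}(\fR)$, so this boundary term is exactly cancelled by $\lambda_n f_{\fR}(\fR)^2(V-V_{\mathsf p}(\fR))$, provided $f_{\fR}'(\fR)=0$.
\end{itemize}
This is why $k_{\fR}=\cot^n(\fR/2)$. Indeed, $\sigma'(t)=-(n-2)2^{1-n}\sin^{1-n}(t/2)\cos(t/2)$ and the companion expression for $\zeta'$ show that $f_{\fR}'(\fR)=0$ holds precisely when $k_{\fR}=\cot^n(\fR/2)$. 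This is also what makes the extension $F_{\fR}$ a $C^1$ function. The cross term $\int\nabla G\cdot\nabla F_{\fR}+\lambda_nGF_{\fR}$ is handled by integrating by parts onto $F_{\fR}$ via $L_gG=0$, so only boundary data at $\partial B_\varepsilon$ remain. Collecting everything yields \eqref{eq:capidentity}.

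For the signs, $D\ge0$ is Bishop–Gromov. For $e_{\fR}'$, differentiating $f'^2+\lambda_nf^2$ and using the ODE gives
\[
e_{\fR}'=2f'\bigl(f''+\lambda_nf\bigr)=2f'\bigl(2\lambda_nf-(n-1)\cot t\,f'\bigr).
\]
On $(0,\fR)$ we have $f'<0$: $f'$ vanishes only at $\fR$, and $\sigma'$ dominates for small $t$. We have $f>0$, and $\cot t\,f'\le0$ for $t\le\pi/2$. For $t>\pi/2$ we instead use $f''+\lambda_nf>0$, which we check directly from the explicit formulas. Hence $e_{\fR}'<0$, and the right-hand side of \eqref{eq:capidentity} is nonnegative. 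This gives \eqref{eq:massdiam}, and the second inequality follows from $\fR\le\diam$ together with the monotonicity of $\cot$. I expect this sign verification for $t$ near $\pi$, together with the precise constant bookkeeping at $\partial B_\varepsilon$, to be the main technical burden.

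For the equality case, equality forces $D\equiv0$ on $[0,\fR]$, so $B_{\fR}(\mathsf p)$ is isometric to a round ball by Bishop–Gromov rigidity, and it also forces $E[w]=0$, so $w=0$. Since $V_{\mathsf p}(\fR)=V_n(\fR)=V$, the ball exhausts $M$. Thus $M$ is either the round sphere ($\fR=\pi$), or a round ball whose boundary sphere is glued to itself by an isometry. In the latter case $w=0$ forces $G$ to be constant outside $B_{\fR}$ with $\fR$ the cut radius, which leads to $\fR=\pi/2$ with antipodal identification, i.e. $\RP^n$. Both models attain equality, since there $G=F_{\fR}$ and $D=0$.
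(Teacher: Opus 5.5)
Your architecture matches the paper's. You expand $E[w]$ on $\Omega_\varepsilon=M\setminus B_\varepsilon(\mathsf p)$, reduce the $G$-terms to $\partial B_\varepsilon(\mathsf p)$ via $L_gG=0$, and handle $F_\fR$ by coarea plus integration by parts in $t$ against $D=V_n-V_{\mathsf p}$. You cancel the term at $t=\fR$ using $f_\fR'(\fR)=0$ and $V_n(\fR)=V$.

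\textbf{Gap at $t=\varepsilon$.} The control at this endpoint is missing, and with the estimate you state it fails for $n\ge6$. Three quantities must be handled:
\begin{itemize}
\item The integration by parts in $t$ also produces $-e_\fR(\varepsilon)D(\varepsilon)$, which your write-up omits.
\item Matching the boundary integral of $G$ on $\partial B_\varepsilon(\mathsf p)$ with the round-cap energy $-\sigma_n(\varepsilon)f_\fR f_\fR'(\varepsilon)$ leaves $\bigl[f_\fR f_\fR'(\sigma_n-\sigma_{\mathsf p})\bigr](\varepsilon)$, where $\sigma_n(t)=|\Sp^{n-1}|\sin^{n-1}t$ and $\sigma_{\mathsf p}(t)=\mathrm{Area}_g(\partial B_t(\mathsf p))$.
\item The right side of \eqref{eq:capidentity} needs $D\,e_\fR'\in L^1(0,\fR)$.
\end{itemize}
We have $e_\fR\sim(n-2)^2t^{2-2n}$, $f_\fR f_\fR'\sim-(n-2)t^{3-2n}$ and $-e_\fR'=O(t^{1-2n})$. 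Your input $\sigma_{\mathsf p}=\sigma_n(1+O(r^4))$, i.e.\ $D=O(t^{n+4})$, then bounds the two boundary terms only by $O(\varepsilon^{6-n})$ and the integrand by $O(t^{5-n})$. That suffices for $n=4,5$. For $n\ge6$ these bounds neither vanish nor give integrability, so ``the divergent parts cancel'' is unjustified.

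What is needed is $D(t)=O(t^{\min\{n+2q+4,\,3n-4\}})$ with $2q+6-n>0$. The paper gets this from the Riccati equation for $Z=S-\cot r\,I$. The Weyl vanishing condition gives $Z=O(r^{q+1})$. Because $\tr W_r=\Ric(N,N)-(n-1)=0$, the traced equation reads $\tr Z=-\sin^{-2}r\int_0^r\sin^2t\,|Z|^2\ud t=O(r^{2q+3})$, which is quadratic in $Z$. Hence the area density ratio is $1+O(r^{2q+4})$. This is where the Weyl condition enters the identity itself, not only the expansion of $G$.

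\textbf{Expansion and constant.} Your expansion $G=r^{2-n}(1+O(r^4))+A+o(1)$ is wrong already at order $r^{4-n}$. The correct input is $G=\sigma(r)+A+v$ with $v,\ r|\nabla v|\to0$; for $n\ge6$ this is only $o(1)$, not $O(r)$. That suffices for $w\in H^1$ and gives $-\int_{\partial B_\varepsilon}wf_\fR'\ud\sigma_g\to(n-2)|\Sp^{n-1}|\bigl(A-2^{2-n}\cot^n\frac{\fR}{2}\bigr)$. Together with $m(\hat g)=4(n-1)A$, this is the constant you left as a placeholder.

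\textbf{Equality case.} Since $V_{\mathsf p}(\fR)=V$, the set $M\setminus B_\fR(\mathsf p)=\{r=\fR\}$ is null. So ``$G$ is constant there'' merely restates $G=f_\fR(r)$. Nothing in your sketch explains why a space form $\Sn/\Gamma$ with $D\equiv0$ and $w=0$ must have $\fR=\pi/2$ and $\Gamma=\{\pm I\}$, rather than, say, being a lens space.

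The paper closes this by lifting. On the one hand $\widetilde G(x)=\sum_{\gamma\in\Gamma}\sigma(\md_{\Sn}(x,\gamma\tilde{\mathsf p}))$. On the other, $w=0$ gives $\widetilde G(x)=\sigma(\md_{\Sn}(x,\tilde{\mathsf p}))+k_\fR\,\sigma(\md_{\Sn}(x,-\tilde{\mathsf p}))$ near $\tilde{\mathsf p}$. Comparing poles, after unique continuation, forces $\Gamma\tilde{\mathsf p}\subseteq\{\pm\tilde{\mathsf p}\}$, hence $|\Gamma|\le2$ since the action is free.

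Alternatively, $D\equiv0$ on $(0,\fR)$ with $V_n(\fR)=V$ says the Dirichlet domain of $\tilde{\mathsf p}$ equals $\overline{B_\fR(\tilde{\mathsf p})}$. That is an intersection of closed hemispheres only when $\fR\in\{\pi/2,\pi\}$.

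\textbf{Sign of $e_\fR'$.} Your argument is fine, but the case split at $\pi/2$ is unnecessary. One has $f_\fR'=\zeta'\bigl(\cot^n\frac{\fR}{2}-\cot^n\frac t2\bigr)<0$, and $f_\fR''=\sigma''+k_\fR\zeta''>0$ because $\sigma$ and $\zeta$ are convex.
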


As two applications, we first prove a mass gap theorem for Einstein manifolds.

\begin{corollary}\label{Cor:Gap theorem}
Assume the same hypotheses as in Theorem \ref{thm:diameter-mass inequality}. There exists a constant $\delta_n>0$ such that if $m(\hat g)<\delta_n$, then $m(\hat g)=0$, and $(M, g)$ is isometric to $(\Sn, g_{\Sn})$.
\end{corollary}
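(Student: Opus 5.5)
We will deduce Corollary \ref{Cor:Gap theorem} from the diameter-mass inequality \eqref{eq:massdiam} of Theorem \ref{thm:diameter-mass inequality}, combined with a rigidity statement for Einstein manifolds whose volume is close to that of the round sphere.

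\emph{Step 1: small mass forces large diameter.} Suppose $m(\hat g)<\delta_n$. By \eqref{eq:massdiam}, $(n-1)2^{4-n}\cot^n(\fR/2)<\delta_n$. Hence $\fR$ is close to $\pi$, with $\pi-\fR\le c_n\delta_n^{1/n}$, because $\cot(t/2)\approx (\pi-t)/2$ near $t=\pi$. Since $V_n$ is continuous and increasing and $V=V_n(\fR)$, this gives $|\Sn|-V\le \epsilon(\delta_n)$, where $\epsilon(\delta)\to 0$ as $\delta\to0$.

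\emph{Step 2: volume pinching forces the round sphere.} We then need the fact that an Einstein manifold with $\Ric=(n-1)g$ and volume close to $|\Sn|$ must be round. By Colding's volume convergence and the Cheeger--Colding theory, volume close to $|\Sn|$ together with $\Ric\ge n-1$ forces Gromov--Hausdorff closeness to $\Sn$. Anderson's $\epsilon$-regularity for Einstein metrics (Einstein metrics are smooth, and volume almost maximal gives harmonic radius bounds) then upgrades this to $C^{k,\alpha}$ closeness to the round metric, modulo diffeomorphism. We then argue by contradiction. Take a sequence of non-round Einstein metrics $g_i$ with masses $m(\hat g_i)\to0$. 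Then $g_i\to g_{\Sn}$ smoothly after pulling back by diffeomorphisms. Every Einstein deformation of the round metric is trivial, since $\Sn$ is infinitesimally Einstein rigid and, more concretely, the Einstein operator on TT-tensors on the round sphere has no kernel. So by local rigidity (Koiso) the $g_i$ are isometric to round for large $i$, which is a contradiction.

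A more self-contained route avoids convergence theory: use \eqref{lbd-mass:volume gap} from Theorem \ref{Thm:G-M type mass formula} in the reverse direction. That estimate bounds volume gap by mass, not the other way, so it does not by itself give rigidity. I will therefore rely on the convergence argument. Its only delicate input is the smooth convergence, which needs the Einstein condition and a uniform noncollapsing bound. Noncollapsing holds automatically here because $V$ is close to $|\Sn|$.

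\emph{Step 3: conclusion.} Once $(M,g)$ is round, we have $M=\Sn$ isometrically. The quotient $\RP^n$ is excluded because its volume is $|\Sn|/2$, which is far from $|\Sn|$. The Green function is then $\sigma(r)$, and $\hat g$ is the Euclidean metric, so $m(\hat g)=0$. The main obstacle is Step 2, the passage from volume pinching to exact roundness. I expect to rely on the known result that for Einstein manifolds there is $\epsilon_n>0$ such that $V>|\Sn|-\epsilon_n$ implies $(M,g)$ is isometric to $\Sn$, obtained via Anderson/Cheeger--Colding together with Koiso rigidity as above, and to choose $\delta_n$ accordingly through Step 1.
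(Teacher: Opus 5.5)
Your proposal is correct and follows the paper's argument. The diameter-mass inequality \eqref{eq:massdiam} turns small mass into a small volume gap; the paper makes this explicit as $|\Sn|-V\le \frac{2^n|\Sp^{n-1}|}{n}\cot^n(\fR/2)\le K_n\, m(\hat g)$ and takes $\delta_n=\epsilon_n|\Sn|/K_n$. The Anderson-type volume-pinching rigidity for Einstein manifolds with $\Ric=(n-1)g$ then forces $(M,g)$ to be the round $\Sn$ with $m(\hat g)=0$; the paper cites this rigidity from Carron/Anderson, whereas you sketch its proof via convergence theory plus Koiso's local rigidity.
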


Another application of Theorem \ref{thm:diameter-mass inequality} is as follows. In \cite[p.1404]{CollSzek}, T. Collins and G. Sz\'ekelyhidi constructed a family of Sasaki-Einstein metrics $g_i$ on $\Sp^5$ with $\Ric(g_i)=4g_i$ while $V_{g_i}\to 0$. Here we changed their notation a little bit. See also the introduction of \cite{GM}. Then we have the following
\begin{corollary}\label{Cor:mass_blowup}
For a sequence of Collins-Sz\'ekelyhidi's Sasaki-Einstein metrics $g_i$ on $\Sp^5$, the mass of $\hat g_i$ blows up uniformly, regardless of the choice of poles of the corresponding Green functions.
\end{corollary}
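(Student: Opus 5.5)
The plan is to apply Theorem \ref{thm:diameter-mass inequality} to each metric $g_i$ with $n=5$, and to observe that its lower bound depends only on the volume $V_{g_i}$, not on the pole.

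\textbf{Setup.} Each $g_i$ is a Sasaki-Einstein metric on $\Sp^5$ with $\Ric(g_i)=4g_i=(n-1)g_i$, where $n=5$. So $(\Sp^5,g_i)$ is a closed Einstein manifold with positive scalar curvature. In particular its Yamabe constant is positive. Hence for every pole $\mathsf p\in\Sp^5$ the Green function $G_{i,\mathsf p}$ of $L_{g_i}$ exists, and the mass $m(\hat g_{i,\mathsf p})$ is well defined. The Weyl vanishing condition \eqref{cond:Weyl-vanishing} is imposed only when $n\ge 6$, so in dimension $5$ the hypotheses of Theorem \ref{thm:diameter-mass inequality} hold, just as in Theorem \ref{Thm:G-M type mass formula}. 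Therefore \eqref{eq:massdiam} gives, for every $\mathsf p$,
\begin{equation*}
m(\hat g_{i,\mathsf p})\ \ge\ 4\cdot 2^{-1}\cot^5\bigl(\fR_i/2\bigr)=2\cot^5\bigl(\fR_i/2\bigr),
\end{equation*}
where $\fR_i\in(0,\pi]$ is defined by $V_5(\fR_i)=V_{g_i}$ as in \eqref{eq:volumeradius}.

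\textbf{Independence of the pole.} By definition $\fR_i$ depends only on $\Vol(\Sp^5,g_i)$. The right-hand side above is therefore independent of $\mathsf p$, and the bound holds for $\inf_{\mathsf p}m(\hat g_{i,\mathsf p})$. This is what gives uniformity in the choice of pole.

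\textbf{Blow-up.} It remains to show $\fR_i\to0$. The function $V_5(t)=|\Sp^4|\int_0^t\sin^4 s\ud s$ is continuous and strictly increasing on $[0,\pi]$ with $V_5(0)=0$. Hence $V_{g_i}\to0$ forces $\fR_i\to0$. More quantitatively, $V_5(t)\ge c\,t^5$ on $[0,\pi/2]$ for some $c>0$, so $\fR_i\le (V_{g_i}/c)^{1/5}$. Consequently $\cot(\fR_i/2)\ge c'\,\fR_i^{-1}$, and
\begin{equation*}
\inf_{\mathsf p\in\Sp^5} m(\hat g_{i,\mathsf p})\ \ge\ C\,V_{g_i}^{-1}\longrightarrow\infty .
\end{equation*}

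The only point needing care is confirming that Theorem \ref{thm:diameter-mass inequality} applies in dimension $5$ without any Weyl hypothesis. The substantive input is the theorem itself, which is assumed. The rest is elementary monotonicity of $V_5$.
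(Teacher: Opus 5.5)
Your proposal is correct and matches the paper's proof: apply Theorem \ref{thm:diameter-mass inequality} with $n=5$ (no Weyl hypothesis is needed there) to get $m(\hat g_i)\ge 2\cot^5(\fR_i/2)$, note that $\fR_i$ depends only on $V_{g_i}$, and use $V_{g_i}\to 0\Rightarrow \fR_i\to 0$. Your explicit remark that the bound does not depend on the pole, and your quantitative bound $m(\hat g_i)\ge C\,V_{g_i}^{-1}$, are valid refinements that the paper leaves implicit.
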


The paper is organized as follows. In Section \ref{sec:expansion},  we first prove a  Green function expansion of $L_g$ on  an Einstein manifold of dimension $n \geq 5$, generalizing the expansion by Viaclovsky \cite{Viaclovsky} in dimension 4. This enables us to prove Theorem \ref{Thm:G-M type mass formula} in Section \ref{sec:mass_formula}.  Finally, we prove the sharp diameter-mass inequality in Theorem \ref{thm:diameter-mass inequality} and its Corollaries \ref{Cor:Gap theorem}-\ref{Cor:mass_blowup} in Section \ref{sec:d-m_ineq}.

\medskip
\noindent{\textbf{On the AI assistance.} The authors initiated this project in 2025, inspired by Gursky and Malchiodi \cite{GM}. A large portion of the paper is completed without using AI. AI-assisted tools helped us to finish the proof of Lemma \ref{lem:key-est} that for $n\geq 5$, the term $-\int_M F G \ud \mu_g$ has a lower bound in terms of the volume gap, by suggesting the auxiliary functions; and also suggested the two-pole Green function used in Theorem \ref{thm:diameter-mass inequality}. The authors verified all arguments and take full responsibility for the completeness and correctness of the proofs. }

\section{Green function expansion on Einstein manifolds}\label{sec:expansion}

As a continuation of \cite[Proposition 2.1]{Viaclovsky}, we establish the Green function expansion of $L_g$ on  an Einstein manifold of dimension $n \geq 5$.

Let $f=O_{m,\alpha}(|x|^\beta)$  mean that $f \in C^{m,\alpha}$ and $|\pa^k f|=O(|x|^{\beta-k})$ for $0 \leq k \leq m$.  Then main result of this section is the following:
\begin{theorem}\label{thm:Green_expan}
Let $(M^n,g), n \geq 5$ be an Einstein manifold with $\Ric=(n-1) g$. Let $G$ denote the Green function of $L_g=-\Delta_g +\lambda_n$ with pole at $\mathsf{p}$.  Assume that  the Weyl vanishing condition \eqref{cond:Weyl-vanishing}  is true. Then 
\begin{enumerate}[(1)]
\item For $n=5$, under $g$-normal coordinates around $\mathrm{p}$, we have
\begin{align}\label{expan_G-dim 5} 
G(x)=&|x|^{-3} + \frac{1}{8} |x|^{-1} +A+  \Big(-\frac{1}{720} \vert \Rm \vert^2 + \frac{179}{5760}\Big) |x| \no\\
 &-\frac{1}{180}R_{iabc} R_{jcab} x_i x_j |x|^{-1}  + \frac{1}{360} R_{aijb} R_{akl b} x_i x_j x_k x_l |x|^{-3}\no\\
 &+ \sum_i a_i x_i+ O_{1,\alpha}(|x|^{1+\alpha})
\end{align}
for all $\alpha \in (0,1)$, where $A,a_i \in \RR$. Moreover, $\hat{g}$ 
is asymptotically flat of order $2$ and 
$m(\hat{g}) =16A$.
\item For $n\geq 6$, under $g$-normal coordinates around $\mathsf{p}$,
\begin{equation}\label{eq:regular-remainder} 
G(x) = \sigma(|x|) + A + v(x),\quad \mathrm{with~~}\quad \vert v \vert + |x| \vert \nabla v \vert + |x|^2 \vert \nabla^2 v\vert \to 0,
\end{equation}
where $\sigma(r) = (2\sin (\frac{r}{2}))^{2-n}$and  $A \in \RR$. Moreover, $\hat{g}$ 
is asymptotically flat of order $\tau = \min\{q+2,n-2\} $ for some integer $q>\lfloor\frac{n-6}{2}\rfloor$, and 
$m(\hat{g}) =4(n-1)A$.
\end{enumerate}
\end{theorem}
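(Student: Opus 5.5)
Theorem \ref{thm:Green_expan} will be proved by comparing $G$ with the radial model $\sigma(r)$, $r=|x|$, which is the Green function of $L_{\Sn}$, and solving for the correction order by order in $g$-normal coordinates. The basic observation is that on an Einstein manifold with $\Ric=(n-1)g$ the volume density satisfies
\begin{equation*}
\sqrt{\det g}=\Big(\frac{\sin r}{r}\Big)^{n-1}\big(1+O(r^4)\big),
\end{equation*}
where the $O(r^4)$ term is built from $|W|^2$-type contractions. Since $\Delta_g u(r)=u''+(\partial_r\log\sqrt{\det g}+\frac{n-1}{r})u'$ for radial $u$, we get
\begin{equation*}
L_g\sigma(r)=(n-2)|\Sp^{n-1}|\delta_{\mathsf p}-\sigma'(r)\,\partial_r\log\big(1+O(r^4)\big)=(n-2)|\Sp^{n-1}|\delta_{\mathsf p}+O(r^{5-n}).
\end{equation*}
The cancellation of every term except the Weyl contribution is exactly what makes the sphere model the right comparison. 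I will expand the density error to the needed order (fourth order via the standard expansion of $\det g$ in normal coordinates, using $\nabla\Rm$ terms and the Einstein condition to kill $\Ric$ derivatives), writing $\theta:=\sqrt{\det g}\,(r/\sin r)^{n-1}-1$.

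For $n=5$, I will expand $\sigma(r)=r^{-3}+\frac18 r^{-1}+c\,r+\dots$ and solve $L_g(G-\sigma)=-\sigma'\partial_r\theta$. The right-hand side is $O(r^{-1})$ and is a polynomial of degree four in $x$ divided by $r^5$. I will invert the flat Laplacian on each spherical-harmonic component: the degree-$0$ part (involving $|\Rm|^2$) and the degree-$2,4$ parts (involving $R_{iabc}R_{jcab}x_ix_j$ and $R_{aijb}R_{aklb}x_ix_jx_kx_l$) each give explicit terms of order $r$, which produce the stated coefficients $-\frac1{720}$, $-\frac1{180}$, $\frac1{360}$, together with the constant $\frac{179}{5760}$ coming from $\sigma$ and $\lambda_5$. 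The remainder then solves an equation with right-hand side $O(r^{\alpha-1})$ in Hölder sense. Standard elliptic theory on punctured balls (weighted Schauder estimates) gives the remainder as $A+a_ix_i+O_{1,\alpha}(r^{1+\alpha})$, where the harmonic-polynomial constants $A,a_i$ cannot be determined locally.

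For $n\ge6$, the Weyl vanishing condition \eqref{cond:Weyl-vanishing} forces $|W|$, and hence all curvature corrections to the density beyond the round model, to vanish to high order at $\mathsf p$. Concretely, $\theta=O(r^{q+2})$ with some $q>\lfloor\frac{n-6}{2}\rfloor$, using that derivatives of $W$ at $\mathsf p$ vanish up to the relevant order; this is the Einstein analogue of Lee--Parker's conformal normal coordinates. Then $L_g(G-\sigma)=O(r^{q+1-n}\cdot r)$, which is integrable enough that $G-\sigma$ is bounded, continuous, and equals $A+v$ with $v\to0$ together with the scaled derivatives.

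In both cases, the asymptotic order and the mass follow from inverting via $y=x/|x|^2$. In $y$-coordinates $\hat g=G^{4/(n-2)}g$ becomes $\delta+O(|y|^{-\tau})$, and for $n\ge6$ we get $\tau=\min\{q+2,n-2\}>\frac{n-2}{2}$. The mass is read off from the term $A|x|^{n-2}$ in $G|x|^{n-2}$, as in Lee--Parker: the relation $G^{4/(n-2)}=|y|^4\big(1+\frac{4A}{n-2}|y|^{2-n}+\cdots\big)$ yields $m=4(n-1)A$, which for $n=5$ gives $16A$. One checks that the $\frac18 r^{-1}$ and $\sigma$ corrections contribute only $O(|y|^{-2})$ terms with zero flux.

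The main obstacle will be the $n\ge6$ step: showing that the Weyl vanishing condition actually controls the density error to order $r^{q+2}$, so that the remainder is $O(r^{\tau})$ with $\tau>\frac{n-2}{2}$. I would handle this by using the Einstein condition to derive an elliptic equation of the form $\Delta W=W*W+c\,W$. Unique-continuation-type bootstrapping then upgrades the pointwise vanishing rate of $|W|$ into vanishing of the derivatives of $W$ at $\mathsf p$, which is what feeds the density expansion.
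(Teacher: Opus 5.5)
Your $n=5$ argument is sound. It is the paper's argument organized differently: the paper posits an ansatz $u$ with unknown coefficients $B,C,D$, while you take $\sigma$ plus a correction obtained by inverting $\Delta_0$ on the harmonic pieces of $Q=W_{aijb}W_{aklb}x_ix_jx_kx_l$. That inversion is non-resonant and does reproduce the stated coefficients: it gives $\frac1{360}Qr^{-3}+\frac1{360}W_{iabc}W_{jabc}x_ix_jr^{-1}-\frac1{720}|W|^2r$, which is the paper's $u$ after writing $\Rm=W+{}$round part. (Minor: $L_g\sigma=O(r^{4-n})$, not $O(r^{5-n})$.)

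\textbf{The gap is in the case $n\ge 6$.} You claim only $\theta=O(r^{q+2})$, the same order as the metric defect $g_r-\sin^2r\,g_{\Sp^{n-1}}$. That yields $L_g(G-\sigma)=O(r^{q+2-n})$, which lies in $L^s$ for some $s>n/2$ only when $q>n-4$. The Weyl vanishing condition gives only $q=\lfloor\frac{n-6}{2}\rfloor+1$. For example, when $n=6$ and $q=1$ the right-hand side is $O(r^{-3})$, which is compatible with $G-\sigma\sim r^{-1}$. So $G=\sigma+A+v$ does not follow.

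The missing idea is that the density defect is \emph{quadratic} in $W$. Since $\Ric=(n-1)g$, the radial Weyl operator $W_r=W(\cdot,N)N$ is trace-free. Let $Z=S-\cot r\,I$, where $S$ is the shape operator of geodesic spheres. Tracing the Riccati equation for $Z$ gives $(\sin^2r\,\tr Z)'=-\sin^2r\,|Z|^2$. Hence $Z=O(r^{q+1})$ but $\tr Z=\partial_r\log(1+\theta)=O(r^{2q+3})$. Then $L_g\sigma=-\sigma'\tr Z=O(r^{2q+4-n})$ with $2q+4-n>-2$, which is exactly the threshold the hypothesis is calibrated for.

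The same gain is needed for the mass. The Weyl part $h=O(\rho^{-q-2})$ of the metric may decay more slowly than the term $A\rho^{2-n}$. Because $h_{ij}\nu^j=0$, its flux density is $-(\partial_\rho\tr_\delta h+\rho^{-1}\tr_\delta h)$. This vanishes in the limit only because $\tr_\delta h=O(\rho^{-2q-4})$, and your proposal never addresses this term.

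Two further corrections.

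\textbf{Your stated main obstacle is not one.} $W$ is smooth, so $|W|=o(r^k)$ already forces its Taylor coefficients at $\mathsf p$ to vanish through order $k$. No equation $\Delta W=W*W+cW$ or unique continuation is needed, and neither would produce the quadratic structure above.

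\textbf{The chart $y=x/|x|^2$ is inadequate for $n\ge6$.} Even on the round sphere, in that chart one has $\sigma^{4/(n-2)}g_{\Sn}-\delta=\frac{1}{6|y|^2}(2\nu_i\nu_j-\delta_{ij})+\cdots$. So the asymptotic order is only $2\le\frac{n-2}{2}$, not $\min\{q+2,n-2\}$. You should instead use $\rho=\frac12\cot(r/2)$, $y=\rho\theta$, in which $\sigma^{4/(n-2)}g_{\Sn}$ is exactly Euclidean.
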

\begin{remark}\label{rmk:introduction}
\begin{enumerate}[(a)]
\item The remainder in \eqref{expan_G-dim 5} has $O_{1,\alpha}$-regularity under the normal coordinates $x$. However, in Lemma \ref{lem:improved-regularity}, we can improve the regularity of the remainder in the expansion for $\varphi=G^{2/3}$ to $O_{2,\alpha}$ under inverted normal coordinates $y=x |x|^{-2}$. Consequently, switching $y$ to $x$, the remainder in \eqref{expan_G-dim 5} actually has  $O_{2,\alpha}$-regularity. 
\item If we rearrange $G(x)=\sigma(x)+A+v(x)$  in Theorem \ref{thm:Green_expan}(1), then $v,|\nabla v|$ are  bounded near $\mathsf p$, but $v$ is not regular at $\mathsf p$ when $n=5$. This phenomenon remarkably differs from the Green function expansion under conformal normal coordinates; see \cite{Lee-Parker}.  In this sense, the estimate \eqref{expan_G-dim 5} with $O_{2,\alpha}$-remainder is better than \eqref{eq:regular-remainder}.
\end{enumerate}
\end{remark}

We start with a general fact: Suppose $(M^n,g),n\geq 4$ is a closed Einstein manifold of positive scalar curvature, normalized as $\Ric = (n-1)g$. 
Under $g$-normal coordinates around $\mathsf{p} \in M$, by \cite[(5.4) and Lemma 5.5]{Lee-Parker} we have 
\begin{align*}
g_{pq} ={}& \delta_{pq} + \frac{1}{3} R_{pijq} x_i x_j + \frac{1}{6} R_{pijq,k} x_i x_j x_k \\
& + \left( \frac{1}{20} R_{pijq,kl} + \frac{2}{45} R_{pijm} R_{qkl m} \right) x_i x_j x_k x_l + O(r^5)
\end{align*}
and
\begin{align*}
\det g =& 1 - \frac{1}{3} R_{ij} x_i x_j - \frac{1}{6} R_{ij,k} x_i x_j x_k \\
& - \left( \frac{1}{20} R_{ij,kl} + \frac{1}{90} R_{pijm} R_{qklm} - \frac{1}{18} R_{ij} R_{kl} \right) x_i x_j x_k x_l + O(r^5).
\end{align*}
Here, all curvatures are evaluated at $\mathsf{p}$. Then it follows that
\begin{align*}
g^{pq} ={}& \delta_{pq} - \frac{1}{3} R_{pijq} x_i x_j - \frac{1}{6} R_{pijq,k} x_i x_j x_k \\
& - \left( \frac{1}{20} R_{pijq,kl} - \frac{1}{15} R_{pijm} R_{qkl m} \right) x_i x_j x_k x_l + O(r^5).
\end{align*}

Now we separate the proof of Theorem \ref{thm:Green_expan} into two cases. 

\subsection{Expansion in dimension 5}
We now fix $n=5$.

\begin{proof}[Proof of Theorem \ref{thm:Green_expan} (1).]

Observe that
\begin{align*}
\det g=& 1 - \frac{4}{3}r^2 + \frac{8}{9}r^4 - \frac{1}{90}R_{pijm} R_{qklm}x_i x_j x_k x_l + O(r^5),\\
\log \sqrt{\det g} &= \frac{1}{2} \left[ -\frac{4}{3} r^2 + \frac{8}{9} r^4 - \frac{1}{90} R_{aijb} R_{aklb} x_i x_j x_k x_l - \frac{1}{2} \left(-\frac{4}{3} r^2\right)^2 + O(r^5) \right] \\
&= -\frac{2}{3} r^2 - \frac{1}{180} R_{aijb} R_{aklb} x_i x_j x_k x_l + O(r^5),
\end{align*}
thereby,
\begin{align*}
\partial_p \log \sqrt{\det g} &= -\frac{4}{3} x_p - \frac{1}{45} R_{apjb} R_{aklb} x_j x_k x_l + O(r^4).
\end{align*}

We introduce an approximate Green function by
\begin{equation}\label{approx_Green_fcn}
u(x) = r^{-3} + \frac{1}{8} r^{-1} + B r  
+ C R_{iabc} R_{jcab} x_i x_j r^{-1} + D R_{aijb} R_{akl b} x_i x_j x_k x_l r^{-3},
\end{equation}
where $B,C,D \in \RR$ are to be determined.
A direct computation yields
\begin{align*}
\partial_q u ={}&-3 r^{-5} x_q - \frac{1}{8} r^{-3} x_q + B r^{-1} x_q \\
& + C \left( R_{qabc} R_{icab} x_i r^{-1} + R_{iabc} R_{qcab} x_i r^{-1} - R_{iabc} R_{jcab} x_i x_j x_q r^{-3} \right) \\
& + D \left( 4 R_{aijb} R_{akqb} x_i x_j x_k r^{-3} - 3 R_{aijb} R_{akl b} x_i x_j x_k x_l x_q r^{-5} \right)\\
={}& -3 r^{-5} x_q - \frac{1}{8} r^{-3} x_q + B r^{-1} x_q \\
& - C \left( R_{iabc} R_{qabc} x_i r^{-1} +  R_{iabc} R_{jcab} x_i x_j x_q r^{-3} \right) \\
& + D \left( 4 R_{aijb} R_{akqb} x_i x_j x_k r^{-3} - 3 R_{aijb} R_{akl b} x_i x_j x_k x_l x_q r^{-5} \right),
\end{align*}
where the second identity follows from the first Bianchi identity
\begin{equation}\label{first_Bianchi}
R_{qabc} R_{icab} x_i + R_{iabc} R_{qcab} x_i=(R_{icab}+R_{ibca})R_{qabc} x_i  =-R_{iabc} R_{qabc} x_i.
\end{equation}
Hence, we obtain
\begin{align}\label{term-1}
g^{pq} \partial_q u =& -3 r^{-5} x_p - \frac{1}{8} r^{-3} x_p + B r^{-1} x_p \no\\
& - C \left( R_{iabc} R_{pabc} x_i r^{-1} + R_{iabc} R_{pcab} x_i x_j x_p r^{-3} \right) \no\\
& + D \left( 4 R_{aijb} R_{akpb} x_i x_j x_k r^{-3} - 3 R_{aijb} R_{akl b} x_i x_j x_k x_l x_p r^{-5} \right) + O(r),
\end{align}
thereby,
$$
\partial_p \log \sqrt{\det g} \, g^{pq} \partial_q u
= 4 r^{-3} + \frac{1}{6} r^{-1} + \frac{1}{15} R_{apjb} R_{akl b} x_j x_k x_l x_p r^{-5} + O(r).
$$

By \eqref{first_Bianchi} we have
\begin{align*}
\pa_p(R_{iabc} R_{jcab}x_i x_j)=R_{pabc}R_{icab}x_i+R_{iabc}R_{pcab}x_i=-R_{iabc} R_{pabc} x_i.
\end{align*}
Substituting the above equation into \eqref{term-1} yields
\begin{align*}
\partial_p (g^{pq} \partial_q u)
={}& -\frac{1}{4} r^{-3} + 4B r^{-1} \\
& - C \Big( R_{pabc} R_{pabc} r^{-1} - R_{pabc} R_{iabc} x_i x_p r^{-3} \\
&\quad\quad - R_{pabc} R_{iabc} x_i x_p r^{-3} + 2 R_{iabc} R_{jcab} x_i x_j r^{-3} \Big) \\
& + D \Big( 4 R_{apib} R_{akpb} x_i x_k r^{-3} + 4 R_{aipb} R_{akpb} x_i x_k r^{-3}  \\
&\quad\quad -4 R_{aijb} R_{ab} x_i x_j r^{-3} -12 R_{aijb} R_{akpb} x_i x_j x_k x_p r^{-5} \\
&\quad\quad -12R_{aijb} R_{akpb} x_i x_j x_k x_p r^{-5}\Big) + O(1) \\
={}& -\frac{1}{4} r^{-3} + (4B - C\vert \Rm \vert^2 + 64D) r^{-1}\\
&+ (2C + 4D) R_{iabc}R_{jabc} x_i x_j r^{-3} \\
&- (2C + 4D) R_{iabc} R_{jcab} x_i x_j r^{-3} \\
&- 24D R_{aijb}R_{aklb} x_i x_j x_k x_l r^{-5} + O(1). 
\end{align*}

Consequently, we are now ready to compute
\begin{align*}
L_g(u) ={}& -\Delta_g u + \frac{15}{4} u \\
={}& -\frac{1}{\sqrt{\det g}} \partial_p ( \sqrt{\det g} g^{pq} \partial_q u ) + \frac{15}{4} u \\
={}& -\partial_p \log \sqrt{\det g} g^{pq} \partial_q u - \partial_p (g^{pq} \partial_q u) + \frac{15}{4} u\\
={}& (24D - \frac{1}{15}) R_{aijb} R_{aklb} x_i x_j x_k x_l r^{-5} \\
& + (2C + 4D) R_{iabc} R_{jcab} x_i x_j r^{-3} \\
& - (2C + 4D) R_{iabc} R_{jabc} x_i x_j r^{-3} \\
& - (4B - C |\Rm|^2 + 64D - \frac{29}{96}) r^{-1} + O(1).
\end{align*}
By letting 
$$
B = -\frac{1}{720} \vert \Rm \vert^2 + \frac{179}{5760}, \quad C=-\frac{1}{180}, \quad D= \frac{1}{360},
$$
 it follows that 
 \begin{equation}\label{conf-Lap:u}
 L_g u= O(1) \qquad \mathrm{~~in~~} \widehat M. 
 \end{equation}
 This directly leads to
 \begin{equation}\label{conf-Lap:G-u}
 L_g(G-u)=O(1) \qquad \mathrm{~~in~~} M. 
  \end{equation}
By the Sobolev embedding theorem we conclude that for any $\alpha \in (0,1)$,
$$
G(x) = u(x) + A + \sum_i a_i x_i + O_{1,\alpha}(r^{1+\alpha}), \qquad A,a_i\in \RR.
$$
This in turn gives the expansion formula \eqref{expan_G-dim 5}.\\

It remains to prove the asymptotic order and to compute the mass.  For this, consider the inverted normal coordinates $y = \vert x \vert^{-2}x$ and let $t = \vert y \vert$. Then it suffices to expand
\begin{equation}\label{Green-fcn-y}
G(y) = t^3 \Big(1+\frac{1}{8}t^{-2}  + A t^{-3} + O_1(t^{-4})\Big),
\end{equation}
thereby,
$$
G(y)^{\frac{4}{3}} = t^{4} \Big(1 + \frac{1}{6}t^{-2}  + \frac{4}{3}A t^{-3}+ O_1(t^{-4})\Big).
$$

As in \cite{Chen-Shi} and \cite{CGS}, it is convenient to use the mass formula associated to asymptotic polar coordinates (see, e.g. Lee-Parker \cite[Equation (9.6) on p.79]{Lee-Parker}) instead of \eqref{Def:ADM_mass}. This greatly simplifies the calculations.

Under the asymptotic coordinates $y$ we have
\begin{align*} 
g ={}&(\delta_{ij} - \frac{1}{3} R_{ikjl} \frac{y_k y_l}{t^4} - \frac{1}{6} R_{ikjl,m} \frac{y_k y_m y_l}{t^6} + O(t^{-4})) \\
& \cdot\frac{1}{t^4} (\md y_i - \frac{2y_i}{t}\md t) \otimes (\md y_j - \frac{2y_j}{t}\md t)  \\
={}& \frac{1}{t^4} \md y_i \otimes \md y_i - \frac{1}{3} R_{ikjl} \frac{y_k y_l}{t^8} \md y_i \otimes \md y_j - \frac{1}{6} R_{ikjl,m} \frac{y_k y_m y_l}{t^{10}} \md y_i \otimes \md y_j + O(t^{-8})
\end{align*}
and
\begin{align}\label{hatg}
\hat{g}_{ij} ={}& \bigg(1 + \frac{1}{6}t^{-2}+ \frac{4}{3}A t^{-3} + O_1(t^{-4})\bigg)\no\\
&\cdot \bigg( \delta_{ij} - \frac{1}{3} R_{ikjl} \frac{y_k y_l}{t^4}  - \frac{1}{6} R_{ikjl,m} \frac{y_k y_m y_l}{t^6} + O(t^{-4})\bigg)\no \\
={}&(1 + \frac{1}{6}t^{-2}+ \frac{4}{3}A t^{-3}) \delta_{ij} -\frac{1}{3} R_{ikjl} y_k y_l t^{-4}  - \frac{1}{6} R_{ikjl,m} y_k y_m y_l t^{-6}+ O_1(t^{-4}).
\end{align}
This implies that $(\widehat M, \hat g)$ is asymptotical flat manifold of order $2$.

A direct computation yields
\begin{align*}
\sum_i \hat{g}_{ii} &= n(1 + \frac{1}{6}t^{-2}+ \frac{4}{3}A t^{-3}) - \frac{1}{3} R_{kl} y_k y_l t^{-4} + R_{kl,m} y_k y_l y_m t^{-6} + O_1(t^{-4}) \\
&= n(1 + \frac{1}{6}t^{-2}+ \frac{4}{3}A t^{-3}) - \frac{1}{3n} R t^{-2}  + O_1(t^{-4}) \\
&= 5(1 + \frac{1}{6}t^{-2}+ \frac{4}{3}A t^{-3}) - \frac{1}{15} R t^{-2}  + O_1(t^{-4}), \\
\hat{g}_{tt} &= \sum_{i,j}\hat{g}_{ij} \frac{y_i y_j}{t^2} = 1 + \frac{1}{6}t^{-2}+ \frac{4}{3}A t^{-3} + O_1(t^{-4}).
\end{align*}
Observe that
\begin{align*}
t^{-1}(5\hat{g}_{tt} - \sum_i \hat{g}_{ii}) &= \frac{1}{15} R t^{-3}  + O(t^{-5}) \\
\pa_t (\hat{g}_{tt} - \sum_i \hat{g}_{ii}) &= \pa_t \big(-4(1 + \frac{1}{6}t^{-2}+ \frac{4}{3}A t^{-3}) + \frac{1}{15} R t^{-2} + O_1 (t^{-4} ) \big) \\
&= \frac{4}{3}t^{-3} - \frac{2}{15} R t^{-3} + 16A t^{-4} +  O(t^{-5}).
\end{align*}
Hence, we obtain
\begin{align*}
m(\hat{g}) &= \frac{1}{\vert \mathbb{S}^{4} \vert} \lim_{t \to \infty} \int_{\mathbb{S}^{n-1}_t} t^{-1}(5\hat{g}_{tt} - \sum_i \hat{g}_{ii}) + \pa_t (\hat{g}_{tt} - \sum_i \hat{g}_{ii}) \ud \sigma \\
&= \frac{1}{\vert \mathbb{S}^{4} \vert} \lim_{t \to \infty} \int_{\mathbb{S}^{n-1}_t} (\frac{4}{3} - \frac{R}{15}) t^{-3} + 16A t^{-4} + O(t^{-5})  \ud \sigma \\
&= 16A.
\end{align*}
\end{proof}

\subsection{Expansion in dimension 6 and higher}

Now we assume $n\geq 6$,  and assume the Weyl vanishing condition \eqref{cond:Weyl-vanishing}.

\begin{proof}[Proof of Theorem \ref{thm:Green_expan}(2).]

Consider a geodesic ball $B_\delta (\mathsf{p})$ and let $N = \nabla r$. Then $\vert N \vert_g = 1$ and $\nabla_N N = 0$. Define the shape operator by
$$
SX:=\nabla_X N  \quad \mathrm{~~for~~} \langle X,N \rangle=0.
$$
which is symmetric because $\langle SX,Y\rangle= \nabla^2r(X,Y)$.   We use the convention for curvature tensor that
$$
\R(X,Y)U = \nabla_X \nabla_Y U -\nabla_Y \nabla_X U -\nabla_{[X,Y]} Y.
$$
We define the radial curvature endomorphism by
$R_r X=\R(X,N)N$ for $\langle X,N\rangle=0$.
Direct computation gives us the  Riccati equation
\begin{equation}\label{eq:shape-riccati}
 S' + S^2 + R_r=0.
\end{equation}
On the other hand, the Schouten tensor $P=\frac{1}{2}g$ by the Einstein condition $\Ric=(n-1)g$. Then
the Riemann curvature decomposition gives
\begin{align*}
\R(X,Y)U={}&W(X,Y)U +P(Y,U)X-P(X,U)Y\\
&+\langle Y,U\rangle   P^\sharp X-\langle X,U\rangle P^\sharp Y\\
={}&W(X,Y)U+\langle Y,U \rangle X-\langle X,U \rangle Y.
\end{align*}
Here $ P^\sharp$ is the raising one index of the Schouten tensor $P$ with $g$.
Set $Y=U=N$ and $\langle X,N\rangle = 0$, it gives
\begin{equation}\label{eq:radial-curvature}
 R_r=I+W_r,\quad W_rX:=W(X,N)N.
\end{equation}
For an orthonormal basis $E_1,\cdots,E_{n-1}$ of $N^\perp$,
\begin{equation}\label{eq:radial-weyl-trace}
 \tr W_r
 =\sum_{a=1}^{n-1}\langle \R(N,E_a)N,E_a\rangle-(n-1)
 =\Ric(N,N)-(n-1)=0.
\end{equation}
Let $Z=S-\cot r\,I$ be a symmetric 2-tensor on $\pa B_\delta(\mathsf p)$, where  $\cot r\,I$ is the shape operator on a geodesic sphere of radius $r$ in $\Sn$. 
Plugging \eqref{eq:radial-curvature} into
\eqref{eq:shape-riccati} yields
\begin{align*}
Z' + Z^2 &= (S' + S^2) - (\cot r)' I + \cot^2 r I - 2 \cot r S \\
&= - (I + W_r) + (1 + \cot^2 r ) I  +\cot^2 r I - 2 \cot r S\\
&= - W_r - 2\cot r ( S - \cot r )\\ 
&= -W_r - 2\cot r Z
\end{align*}
and 
\begin{align*}
(\sin^2 r Z)' &= 2\sin r \cos r~ Z + \sin^2 r Z' \\
&= 2\sin r \cos r~Z - \sin^2 r~(2\cot r Z + Z^2 + W_r)\\
&= -\sin^2 r~(Z^2 + W_r).
\end{align*}
Thus we obtain
\begin{equation}\label{eq:equation of Z}
 Z'+2\cot r\,Z+Z^2+W_r=0,
 \qquad (\sin^2r\,Z)'=-\sin^2r\,(Z^2+W_r).
\end{equation}
In geodesic normal coordinates, 
$$
\nabla^2_{\alpha \beta } r = \pa^2_{\alpha\beta} r - \Gamma_{\alpha\beta}^\gamma \pa_\gamma r = \frac{\delta_{\alpha\beta}}{r} - \frac{x_\alpha x_\beta}{r^3} + O(r),
$$
which yields the expansion for the shape operator $S=r^{-1}I+O(r)$. This in turn implies $Z=O(r)$.
Hence, integrating \eqref{eq:equation of Z} over $(0,r)$ yields
\begin{equation}\label{eq:Z}
 Z(r)=-\frac{1}{\sin^2r}\int_0^r\sin^2t\,
       \bigl(Z(t)^2+W_t\bigr)~\ud t.
\end{equation}
From the Weyl vanishing condition \eqref{cond:Weyl-vanishing} that  $W_t=O(t^q)$ for some integer $q>\lfloor \frac{n-6}{2}\rfloor$. If $Z=O(r^m)$, then the above identity improves the estimate to
$Z=O(r^{\min\{q+1, 2m+1\}})$. Iterating from $m=1$  gives
\begin{equation}\label{Z:vanishing-order}
Z=O(r^{q+1}).
\end{equation} Taking the trace in \eqref{eq:Z} and using \eqref{eq:radial-weyl-trace}, we obtain a stronger estimate
\begin{equation}\label{eq:e}
 \tr Z
 =-\frac1{\sin^2r}\int_0^r\sin^2t\,|Z(t)|^2 \ud t
 =O(r^{2q+3}).
\end{equation}

Under the geodesic polar coordinates $(r,\theta) \in \RR_+\times \Sp^{n-1}$,
\begin{equation*}
g = \md r^2+g_r \qquad \mathrm{with~~}g_r(\theta)=\sin^2r\,a(r,\theta), ~~a(0,\theta)=g_{\mathbb{S}^{n-1}}.
\end{equation*}
Also $g_{\Sn}=\md r^2 +\sin^2 r g_{\Sp^{n-1}}$. Geometrically,  $\pa_r(g_r-\sin^2 r g_{\Sp^{n-1}})=2Z$ and
$$\partial_r\log\det(g_{\mathbb{S}^{n-1}}^{-1}a)=\pa_r\bigl[\log \det g_r-\log \det (\sin^2 r g_{\Sp^{n-1}})\bigr]=2\tr Z.$$

By \eqref{Z:vanishing-order} and \eqref{eq:e} we obtain
\begin{equation}\label{eq:angularbounds}
 g_r-\sin^2 r g_{\mathbb{S}^{n-1}}=O(r^{q+2}),\qquad
 \log\det(g_{\mathbb{S}^{n-1}}^{-1}a)=O(r^{2q+4}).
\end{equation}
Expanding $\log\det(I+E)=\tr E+O(|E|^2)$  gives
\begin{equation}\label{eq:traceangular}
 \tr_{g_{\mathbb{S}^{n-1}}}(a-g_{\mathbb{S}^{n-1}})=O(r^{2q+4}),
\end{equation}
and its derivative estimates hold uniformly in $\theta$.  

By \eqref{eq:Green-fcn-sphere} and \eqref{eq:e} we obtain that away from $\mathsf p$,
\begin{equation*}
 L_g \sigma=-\sigma'(r) \tr Z=O(r^\beta),\qquad \beta:=2q+4-n>-2.
\end{equation*}
In the distribution sense, $L_g \sigma$ contains the term $(n-2)|\Sp^{n-1}|\delta_{\mathsf p}$ as
$\sigma \sim r^{2-n}$ near $\mathsf p$. Take a smooth cutoff function $\chi$ supported in $B_\delta (\mathsf p)$ and equal to $1$ near $\mathsf p$, the Dirac terms in
$L_g(G-\chi \sigma)$ cancel, and its remaining part belongs to $L^s(M)$
for some $s>n/2$. Elliptic regularity theory gives
$G-\chi \sigma \in W^{2,s}(M)\subset C^0(M)$. So, we have the expression of $G=\sigma+A+v$ near $\mathsf p$.

Set $v=G-\sigma-A$. Then $v\to0$ as $r \to 0$ and
$L_g v=\sigma'\tr Z- n(n-2) A/4 $  near $\mathsf p$. Interior Schauder estimates on rescaled annuli (see for example \cite{Gilbarg-Trudinger}) yield
$$
 \sup_{B_{2r}\setminus B_{r/2}}\Big(r|\nabla v|+r^2|\nabla^2v|\Big)
 \le C\Big(\sup_{B_{4r}\setminus B_{r/4}}|v|
             +r^{\beta+2}+r^2|A|\Big)\rightarrow 0 \quad \mathrm{as~~} r \to 0.
$$
This proves (\ref{eq:regular-remainder}).

To discuss the asymptotic order and compute the mass, we introduce the asymptotic coordinates by
\begin{equation*}
 \rho=\tfrac12\cot(r/2),\qquad y=\rho\theta.
\end{equation*}
Using the identities $\rho'=-(2\sin(r/2))^{-2} = -(\rho^2 +1/4)$ and
$\sigma(r)=(\rho^2+1/4)^{(n-2)/2}$, we have
\begin{equation*}
g_0:= \sigma^{4/(n-2)} g = \md\rho^2+\rho^2a(r,\theta) =  \vert \md y \vert^2 - \rho^2 g_{\mathbb{S}^{n-1}} + \rho^2 a(r, \theta)  = \delta+h.
\end{equation*}
By \eqref{eq:angularbounds}-\eqref{eq:traceangular} we obtain
\begin{equation}\label{eq:hestimates}
 h_{ij}\nu^j=0,\qquad
 h=O_2(\rho^{-q-2}),\qquad
 \tr_\delta h=O_1(\rho^{-2q-4}),\qquad \nu=y/\rho.
\end{equation}
On the other hand, the Green function expansion \eqref{eq:regular-remainder} gives
\begin{equation}\label{eq:factor}
 \left(\frac{G}{\sigma}\right)^{4/(n-2)}
 =1+\frac{4A}{n-2}\rho^{2-n}+O_2(\rho^{1-n}),
\end{equation}
thereby
\begin{equation}\label{eq:gexpansion}
 \hat{g}_{ij}= \left(\frac{G}{\sigma}\right)^{4/(n-2)} g_0 = \delta_{ij}+h_{ij} +\frac{4A}{n-2}\rho^{2-n}\delta_{ij} + O_2(\rho^{1-n}).
\end{equation}
Hence, under the asymptotic $y$-coordinates, $\hat{g}$ has asymptotically flat order
$\tau=\min\{q+2,n-2\}>(n-2)/2$.

Observe that
$$
\sum_i \hat g_{ii} = n + \tr_\delta h + \frac{4n A}{n-2} \rho^{2-n} + O_2 (\rho^{1-n} ),
$$
and from \eqref{eq:hestimates}-\eqref{eq:gexpansion} that
$$
\hat g_{\rho\rho} =  1 + \frac{4 A}{n-2} \rho^{2-n} + O_2 (\rho^{1-n} ).
$$
Combining these terms above yields
\begin{align*}
&\pa_\rho (\hat g_{\rho\rho} - \sum_i \hat g_{ii}) + \frac{1}{\rho}(n\hat g_{\rho\rho} - \sum_i \hat g_{ii}) \\
={}& 4(n-1) A \rho^{1-n} + \pa_\rho \tr_\delta h + \frac{\tr_\delta h}{\rho} + O(\rho^{-n}) \\
={}& 4(n-1) A \rho^{1-n} + O(\rho^{-2q-5}) + O(\rho^{-n}).
\end{align*}
Consequently, we obtain
\begin{align*}
m(\hat{g}) &= \frac{1}{\vert \mathbb{S}^{n-1} \vert} \lim_{\rho \to \infty} \int_{\vert y \vert = \rho} \pa_\rho (\hat g_{\rho\rho} - \sum_i \hat g_{ii}) + \frac{1}{\rho}(n\hat g_{\rho\rho} - \sum_i \hat g_{ii})~ \ud \sigma\\
&= \frac{1}{\vert \mathbb{S}^{n-1} \vert} \lim_{\rho \to \infty} \int_{\mathbb{S}^{n-1}} 4(n-1) A  + O(\rho^{n-6-2q}) + O(\rho^{-1})~\ud\sigma\\
&= 4(n-1)A.
\end{align*}
This completes the proof.
\end{proof}

\begin{remark}\label{rem:lowdim}
For $n=4,5$, take $W_g=O(1)$ near $\mathsf p$, the same argument starting from \eqref{eq:Z} applies to obtain that
$$G=\sigma (r)+A+v, \qquad \mathrm{with~~} \vert v \vert + |x| \vert \nabla v \vert + |x|^2 \vert \nabla^2 v\vert \to 0 \quad \mathrm{as~~} |x| \to 0.$$
This is also a direct consequence of \cite[Proposition 2.1]{Viaclovsky} and Theorem \ref{thm:Green_expan}(1).
\end{remark}

\section{Gursky-Malchiodi type mass formulas}\label{sec:mass_formula}

We start with computations those are useful both in 5-dimensional and higher dimensional cases.
Let $\widehat M:=M \setminus\{\mathsf{p}\}$ and $\hat{g} = G^{\frac{4}{n-2}}g$ = $\varphi^2 g$ with $\hat R=0$ in $\widehat M$. The conformal covariance of the conformal Laplacian yields
\begin{align*}
-\hat{\Delta} \varphi &= L_{\hat{g}}(\varphi)\\
&= G^{-\frac{n+2}{n-2}} L_g (G \cdot G^{\frac{2}{n-2}}) \\
&= G^{-\frac{n+2}{n-2}} (- \Delta G^{\frac{n}{n-2}} + \frac{n(n-2)}{4} G^{\frac{n}{n-2}}) \\
&= G^{-\frac{n+2}{n-2}}(-\frac{n}{n-2}G^{\frac{2}{n-2}}\Delta G - \frac{n}{n-2} \frac{2}{n-2}G^{\frac{4-n}{n-2}} \vert\nabla G \vert^2 + \frac{n(n-2)}{4}G^{\frac{n}{n-2}}) \\
&=G^{-\frac{n+2}{n-2}}(-\frac{n}{n-2}G^{\frac{2}{n-2}}\frac{n(n-2)}{4}G - \frac{2n}{(n-2)^2} G^{\frac{8-n}{n-2}} \vert\hat{\nabla} G \vert^2 + \frac{n(n-2)}{4}G^{\frac{n}{n-2}}) \\
&= G^{-\frac{n+2}{n-2}}(-\frac{n}{2}G^{\frac{n}{n-2}} - \frac{n}{2}G^{\frac{n}{n-2}}\vert\hat{\nabla} G^{\frac{2}{n-2}} \vert^2) \\
&= -\frac{n}{2} G^{\frac{2}{2-n}}(1+\vert\hat{\nabla} G^{\frac{2}{n-2}} \vert^2).
\end{align*}
This leads to
\begin{equation}\label{Lap_varphi}
\hat{\Delta}\varphi = \frac{n}{2} \varphi^{-1} (1 + \vert \hat{\nabla} \varphi \vert^2 ) \qquad \mathrm{in~~} \widehat M.
\end{equation}
Similarly, since $L_g G=0$ in $\widehat M$, we have
\begin{align*}
-\hat\Delta \varphi^{-1}=L_{\hat g}\varphi^{-1}=&G^{-\frac{n+2}{n-2}}L_g(G\varphi^{-1})\\
=&G^{-\frac{n+2}{n-2}}\Big[-\Delta G^{\frac{n-4}{n-2}}+\frac{n(n-2)}{4}G^{\frac{n-4}{n-2}}\Big]\\
=&G^{-\frac{n+2}{n-2}}\Big[-\frac{n-4}{n-2}G^{-\frac{2}{n-2}}\Delta G +\frac{2(n-4)}{(n-2)^2}G^{-\frac{n}{n-2}}|\nabla G|^2+\frac{n(n-2)}{4}G^{\frac{n-4}{n-2}}\Big]\\
=&G^{-\frac{n+2}{n-2}}\Big[\frac{n}{2}G^{\frac{n-4}{n-2}}+\frac{2(n-4)}{(n-2)^2}G^{\frac{4-n}{n-2}}|\hat \nabla G|^2\Big].
\end{align*}
This implies
\begin{equation}\label{Lap_inverse_varphi}
-\hat\Delta \varphi^{-1}=\varphi^{-3}\Big(\frac{n}{2}+\frac{n-4}{2}|\hat\nabla \varphi|^2\Big) \qquad \mathrm{in~~} \widehat M.
\end{equation}
The conformal change of Schouten tensor $P=\frac{1}{n-2}(\Ric-\frac{R}{2(n-1)}g)$ gives
\begin{align*}
\frac{1}{2} \varphi^{-2} \hat{g}_{ij} &= \frac{1}{2} g_{ij} 
=P_{ij} \\ &= \hat{P}_{ij} + (\log \varphi)_{,ij} + (\log \varphi)_{,i}(\log \varphi)_{,j} - \frac{1}{2} \vert \hat{\nabla} \log \varphi \vert^2 \hat{g}_{ij}\\
&= \hat{P}_{ij} + (\varphi^{-1} \varphi_{,ij} - \varphi^{-2} \varphi_{,i} \varphi_{,j}) + \varphi^{-2} \varphi_{,i} \varphi_{,j} - \frac{1}{2} \varphi^{-2}\vert \hat{\nabla}\varphi \vert^2 ]\hat{g}_{ij} \\
&= \hat{P}_{ij}+ \varphi^{-1} \varphi_{,ij} - \frac{1}{2} \varphi^{-2}\vert \hat{\nabla}\varphi \vert^2 \hat{g}_{ij}.
\end{align*}
Here, the comma means the covariant derivative with respect to the metric $\hat{g}$. Then
\begin{align*}
\hat{P}_{ij} &= - \varphi^{-1} \varphi_{,ij} + \frac{1}{2} \varphi^{-2}(1+\vert \hat{\nabla}\varphi \vert^2 )\hat{g}_{ij} \\
&\overset{(\ref{Lap_varphi})}{=} - \varphi^{-1}(\varphi_{,ij} - \frac{\hat{\Delta}\varphi}{n} \hat{g}_{ij}) = - \varphi^{-1}(\mathring{\hat{\nabla}}^2\varphi)_{ij}.
\end{align*}

Let $F = \varphi^{-1} (1 + \vert \hat{\nabla} \varphi \vert^2) -4$. Then we have
\begin{align*}
F_{,j} &= -\varphi^{-2}\varphi_{,j}(1+\vert \hat{\nabla} \varphi \vert^2) + 2 \varphi^{-1}\varphi_{,ij}\varphi_{,i} \\
&\overset{(\ref{Lap_varphi})}{=} -\frac{2}{n} \varphi^{-1} \varphi_{,j} \hat{\Delta} \varphi  + 2 \varphi^{-1}(\mathring{\hat{\nabla}}^2\varphi)_{ij} \varphi_{,i} + \frac{2}{n} \varphi^{-1} \varphi_{,j} \hat{\Delta} \varphi \\
&= -2 \hat{P}_{ij} \varphi_{,i}
\end{align*}
and hence
\begin{equation}\label{Lap_F}
\hat{\Delta} F = -2 \hat{P}_{ij} \varphi_{,ij} = 2 \varphi^{-1} \vert \mathring{\hat{\nabla}}^2\varphi \vert^2.
\end{equation}
This yields
\begin{align}\label{Lap_Aux-fcn}
\frac{1}{2}\hat \Delta (F^2+8F)=&|\hat \nabla F|^2+(F+4)\hat \Delta F\no\\
=&|\hat \nabla F|^2+\varphi^{-1}\hat \Delta F+2 \varphi^{-2}|\hat \nabla \varphi|^2 \vert \mathring{\hat{\nabla}}^2\varphi \vert^2.
\end{align}

Write $g=G^{-\frac{4}{n-2}}\hat g$. Then we have
\begin{equation}\label{eq:Lap_G-inverse}
-\hat \Delta G^{-1}= \lambda_n G^{-\frac{n+2}{n-2}} \quad \mathrm{~~in~~} \widehat M.
\end{equation}
\begin{remark}
If $(\widehat M,\hat g)=(\Rn,|\md y|^2)$, then by \eqref{eq:Lap_G-inverse} we have
$$\varphi(y)=G(y)^{\frac{2}{n-2}}=|y|^2+\frac{1}{4}$$
and clearly $|\hat \nabla \varphi|^2=4\varphi-1$.
\end{remark}

The proof of Theorem \ref{Thm:G-M type mass formula} consists of the next three subsections.

\subsection{Mass formula in dimension 5}

We first prove the gradient bound \eqref{est:lbd_F} and the mass formula \eqref{G-M mass identity} in Theorem \ref{Thm:G-M type mass formula} when $n=5$.

Write 
$$
G_0(y)=u(y)+A+ t^{-2} \sum_i a_i y_i
$$
with $u$ as in \eqref{approx_Green_fcn}.
Then by \eqref{expan_G-dim 5}, we have
$$
G(y)=G_0(y) + O_{1,\alpha} (t^{-1-\alpha}).
$$
Define
\begin{align}\label{def:varphi_0}
&\varphi_0(y)= G_0^{\frac{2}{3}}(y)= t^2\Big[1 + \frac{1}{12}t^{-2} + \frac{2}{3} At^{-3} + \frac{2}{3} \sum_i a_i y_i t^{-5} + \frac{2}{3}v(y)t^{-3}-\frac{1}{576}t^{-4} \Big], 
\end{align}
where
\begin{align*}
v(y)= \Big(-\frac{1}{720} \vert \Rm \vert^2 + \frac{179}{5760}\Big) t^{-1} -\frac{1}{180}R_{iabc} R_{jcab} y_i y_j t^{-3}  + \frac{1}{360} R_{aijb} R_{akl b} y_i y_j y_k y_l t^{-5}.
\end{align*}
Then it follows that 
$$\varphi(y) = \varphi_0(y) + O_{1, \alpha}(t^{-2-\alpha}).$$

For our purpose, in order to improve the regularity of $G$, we  instead  improve the regularity of $\varphi$ under the inverted normal coordinates.

\begin{lemma}\label{lem:improved-regularity}
Let $\varphi=G^{2/3}$. Then under inverted normal coordinates $y$,
$$\varphi(y)=\varphi_0(y)+O_{2,\alpha}(|y|^{-2-\alpha}),$$
where $\varphi_0$ is given as in \eqref{def:varphi_0}.
\end{lemma}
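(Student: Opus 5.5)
Set $\psi:=\varphi-\varphi_0$ on the asymptotic end $\{|y|\ge R_0\}$. We already know $\psi=O_{1,\alpha}(t^{-2-\alpha})$ with $t=|y|$, so only the second derivatives are missing. The plan is to derive an elliptic equation for $\psi$ with respect to $\hat g$ (or the Euclidean metric in $y$) and then apply rescaled interior Schauder estimates on dyadic annuli.

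\textbf{Step 1: the equation for $\psi$.} By \eqref{Lap_varphi},
$$\hat\Delta\varphi=\tfrac{5}{2}\varphi^{-1}(1+|\hat\nabla\varphi|^2).$$
Subtracting the same expression for $\varphi_0$ gives
$$\hat\Delta\psi-b^i\partial_i\psi-c\,\psi=\mathcal E_0,\qquad \mathcal E_0:=\tfrac52\varphi_0^{-1}(1+|\hat\nabla\varphi_0|^2)-\hat\Delta\varphi_0 .$$
Here $b^i=\tfrac52\varphi^{-1}\hat g^{ij}\partial_j(\varphi+\varphi_0)=O(t^{-1})$ and $c=-\tfrac52\varphi^{-1}\varphi_0^{-1}(1+|\hat\nabla\varphi_0|^2)=O(t^{-2})$. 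To bound these coefficients we only need $\varphi\sim t^2$ and $|\partial\varphi|\sim t$, which we already have at the $C^{1}$ level, and moreover in $C^{0,\alpha}$ on annuli after rescaling, because $\varphi\in C^{1,\alpha}$. It is cleaner to work with $g$-quantities instead: $G-G_0$ satisfies $L_g(G-G_0)=-L_gG_0=O(1)$ in $x$-coordinates, by \eqref{conf-Lap:u}. The catch is that $-L_g G_0$ is only bounded, not Hölder, at $\mathsf p$, because the $O(1)$ remainder of $L_g u$ contains angular terms homogeneous of degree $0$. This is exactly why the result is stated for $\varphi$ in inverted coordinates: in a dyadic annulus such a degree-$0$ term is smooth, and after rescaling it has Hölder norm $O(1)$.

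\textbf{Step 2: rescaled Schauder estimates.} For large $\rho$, pull back to the unit annulus via $y=\rho z$, $1/2<|z|<4$. Set $\psi_\rho(z)=\rho^{2+\alpha}\psi(\rho z)$. Since $\hat g_{ij}=\delta_{ij}+O_2(t^{-2})$ by \eqref{hatg}, the rescaled operator is uniformly elliptic with $C^{0,\alpha}$-bounded coefficients; note $\rho b\sim O(1)$ and $\rho^2c=O(1)$. Hence
$$\|\psi_\rho\|_{C^{2,\alpha}(1<|z|<2)}\le C\big(\|\psi_\rho\|_{C^0}+\rho^{2+\alpha}\rho^{2}\|\mathcal E_0(\rho\,\cdot)\|_{C^{0,\alpha}}\big).$$
The first term is $O(1)$ by the $O_{1,\alpha}$ bound. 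The estimate therefore reduces to showing $\mathcal E_0=O_{0,\alpha}(t^{-4-\alpha})$ in the scaled sense, or better $O_{0,1}(t^{-5})$.

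\textbf{Step 3 (the main obstacle): estimating $\mathcal E_0$.} In the $g$-picture, $\mathcal E_0$ corresponds via the conformal covariance (as in the computation leading to \eqref{Lap_varphi}) to $G_0^{-7/3}L_gG_0$ up to gradient factors. Precisely, $\hat\Delta\varphi_0-\tfrac52\varphi_0^{-1}(1+|\hat\nabla\varphi_0|^2)=-\tfrac53G_0^{-4/3}\cdot\tfrac{L_gG_0}{G_0^{\dots}}$, a weighted multiple of $L_gG_0$. In $x$-coordinates $L_gG_0=O(1)$, and each term of the remainder is a finite sum of homogeneous functions of degree $\ge 0$ times smooth functions, arising from the Taylor expansions of $g^{pq}$ and $\det g$. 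We will verify that the remainder is of the form $\sum_k h_k(x)$ with $h_k$ homogeneous of degree $k\ge0$ plus $O_{0,1}(|x|)$, which gives scaled $C^{0,\alpha}$ bounds $|x|^{-\alpha}[\,\cdot\,]_\alpha=O(1)$ on annuli. Translating to $y$ with the weight $G_0^{-7/3}\sim t^{-7}$ and the Jacobian factor $t^{-4}$ from $\hat\Delta$ versus $\Delta_g$ should give $\mathcal E_0=O(t^{-5})$ with matching scaled Hölder bounds, which is more than enough since $\rho^{4+\alpha}\rho^{-5}\to0$. The delicate bookkeeping is to confirm that the expansion of $\varphi_0=G_0^{2/3}$ in \eqref{def:varphi_0}, including the $-\tfrac1{576}t^{-4}$ term, coincides with $G_0^{2/3}$ up to terms whose contributions are $O_{2}(t^{-2-\alpha})$. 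If necessary we define $\varphi_0:=G_0^{2/3}$ exactly and absorb the truncation into the remainder. The $a_iy_i$ term is harmless because $a_ix_i$ is $g$-harmonic to leading order. Rescaling back then gives $|\partial^2\psi|=O(t^{-4-\alpha})$ together with the Hölder seminorm bound, i.e. $\psi=O_{2,\alpha}(|y|^{-2-\alpha})$.
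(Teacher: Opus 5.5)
Your plan is essentially the paper's proof. The paper also uses conformal covariance and $L_gG_0=-L_g(G-G_0)=O(1)$ to obtain $\hat\Delta(\varphi-\varphi_0)=\frac23\varphi^{-1}G_0^{-1}L_gG_0+O_{0,\alpha}(t^{-4-\alpha})$. It then applies weighted elliptic regularity on the asymptotic end (Lee--Parker, Theorem 9.2), which amounts to your rescaled Schauder estimate on dyadic annuli. Your remark on the degree-$0$ homogeneous terms of $L_gG_0$ supplies the H\"older control that the paper leaves implicit.

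One correction to Step~3. Your $\hat\Delta$ is taken with respect to $\hat g=\varphi^2g$, not $\varphi_0^2g$. The exact identity holds only for the latter metric: $\frac52\varphi_0^{-1}(1+\varphi_0^{-2}|\nabla\varphi_0|_g^2)-\Delta_{\varphi_0^2g}\varphi_0=\frac23G_0^{-5/3}L_gG_0=O(t^{-5})$, with weight $G_0^{-5/3}$ and no extra Jacobian factor. So your $\mathcal E_0$ also contains a metric-mismatch term, to leading order $-3\langle\hat\nabla(\psi/\varphi_0),\hat\nabla\varphi_0\rangle$. This term is only $O_{0,\alpha}(t^{-4-\alpha})$, using the a priori $O_{1,\alpha}$ bound on $\psi$. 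Hence $\mathcal E_0=O(t^{-5})$ is false as stated. However, $O_{0,\alpha}(t^{-4-\alpha})$ is exactly what your Step~2 requires, and it is the same error the paper carries. Alternatively, you can absorb this term into the drift $b^i\partial_i\psi$.
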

\begin{proof}
Observe that
\begin{align*}
\hat{\Delta} \varphi_0 &= \frac{1}{\sqrt{\det \hat{g}}} \pa_p (\sqrt{\det \hat{g}} \ \hat{g}^{pq}\pa_q \varphi_0) \\
&=\frac{1}{\varphi^5 \sqrt{\det g}} \pa_p (\varphi^5\sqrt{\det g} \ \varphi^{-2} g^{pq}\pa_q \varphi_0)\\
&=\frac{\varphi^{-1}}{\varphi_0^4 \sqrt{\det g}} \pa_p (\varphi_0^4\sqrt{\det g} \ g^{pq}\pa_q \log \varphi_0) + O_{0,\alpha} (t^{-4-\alpha}) \\
&=\varphi^{-1} \left(\Delta \log \varphi_0 + 4\vert \nabla \log \varphi_0 \vert^2  \right) + O_{0,\alpha} (t^{-4-\alpha}) \\
&=\varphi^{-1} \left(\frac{2}{3} \Delta \log G_0 + \frac{16}{9}\vert \nabla \log G_0 \vert^2  \right) + O_{0,\alpha} (t^{-4-\alpha}) \\
&=\varphi^{-1} \left(\frac{2}{3} G_0^{-1} \Delta G_0 + \frac{10}{9} G_0^{-2} \vert \nabla  G_0 \vert^2  \right) + O_{0,\alpha} (t^{-4-\alpha}).
\end{align*}

Moreover, by \eqref{Lap_varphi} we have
\begin{align*}
\hat{\Delta} \varphi  &= \frac{5}{2} \varphi^{-1} (1 + \vert \hat{\nabla} \varphi \vert^2) \\
&= \frac{5}{2} \varphi^{-1} (1 + \varphi^{-2} \vert \nabla \varphi \vert^2) \\
&= \frac{5}{2} \varphi^{-1} (1 + \varphi_0^{-2} \vert \nabla \varphi_0 \vert^2) + O_{0,\alpha} (t^{-4-\alpha}) \\
&= \frac{5}{2} \varphi^{-1} (1 + \frac{4}{9} G_0^{-2} \vert \nabla G_0 \vert^2) + O_{0,\alpha} (t^{-4-\alpha}). 
\end{align*}

Consequently, we conclude that
$$
\hat{\Delta} (\varphi - \varphi_0) = \frac{2}{3} \varphi^{-1} G_0^{-1} L_g G_0 + O_{0,\alpha} (t^{-4-\alpha}) =O_{0,\alpha} (t^{-4-\alpha}),
$$
where the last identity follows from 
$$L_g (G-G_0) = O(1) \quad \mathrm{by}\quad \eqref{conf-Lap:G-u} \quad  \Rightarrow \quad \varphi^{-1} G_0^{-1} L_g G_0=O(t^{-5}).$$
Hence, the elliptic regularity in asymptotically flat spaces (cf. \cite[Theorem 9.2]{Lee-Parker}) yields $\varphi -\varphi_0 \in C_{-2-\alpha}^{2,\alpha}(\widehat M)$, that is,
$\varphi = \varphi_0 + O_{2, \alpha}(t^{-2-\alpha})$.
\end{proof}

\begin{proof}[Proof of \eqref{est:lbd_F}\eqref{G-M mass identity} in Theorem \ref{Thm:G-M type mass formula} when $n=5$.]
By Green function expansion \eqref{Green-fcn-y},Theorem \ref{thm:Green_expan}(a) and Remark \ref{rmk:introduction},  we obtain
\begin{equation}\label{Asymp_F}
F = -\frac{32}{3} A t^{-3} + o(t^{-3}) = - \frac{2}{3} m(\hat{g}) t^{-3} + o(t^{-3})
\end{equation}
and
$$
F^2 + 8 F =  - \frac{16}{3} m(\hat{g}) t^{-3} + o(t^{-3}).
$$
By \eqref{Lap_F} and \eqref{Asymp_F}, we apply the maximum principle to show that $F \leq 0$ in $\widehat M$.

Define $\widehat M_\rho:=\widehat M \cap \{|y|<\rho\}$ for $\rho \gg 1$. Integrating \eqref{Lap_Aux-fcn} over $\widehat M_\rho$ yields
\begin{align*}
8 |\Sp^4| m(\hat g)+o(1)=&\frac{1}{2}\int_{\pa \widehat M_\rho}\frac{\pa}{\pa \hat \nu}(F^2+8F) \ud \sigma_{\hat g}\\
=&\int_{\widehat M_\rho} |\hat \nabla F|^2 \ud \mu_{\hat g}+2\int_{\widehat M_\rho} \varphi^{-2}|\hat \nabla \varphi|^2 \vert \mathring{\hat{\nabla}}^2\varphi \vert^2\ud \mu_{\hat g}+\int_{\widehat M_\rho} \varphi^{-1}\hat \Delta F \ud \mu_{\hat g}.
\end{align*}
Finally, letting $\rho \to \infty$ we obtain \eqref{G-M mass identity} when $n=5$. 
\end{proof}

\subsection{Mass formula in dimension 6 and higher}

Now we prove the $n\geq 6$ case,  with the Weyl vanishing condition.

\begin{proof}[Proof of \eqref{est:lbd_F}\eqref{G-M mass identity} in Theorem \ref{Thm:G-M type mass formula} when $n\geq 6$.]
By (\ref{eq:gexpansion}) we obtain
\begin{equation}\label{invgexpansion}
\hat{g}^{ij} = \delta_{ij} - h_{ij} -\frac{4A}{n-2}\rho^{2-n}\delta_{ij}+O(\rho^{1-n})
\end{equation}
and from (\ref{eq:factor}) that
\begin{align*}
\varphi &= G^{\frac{2}{n-2}} = (\rho^2 + \frac{1}{4})(1 + \frac{2}{n-2} A \rho^{2-n} + O(\rho^{1-n})) \\
&= \rho^2 + \frac{1}{4}+\frac{2}{n-2} A \rho^{4-n} + O(\rho^{3-n})).
\end{align*}
By \eqref{invgexpansion} and \eqref{eq:hestimates} we have 
\begin{align*}
1+\vert \hat{\nabla} \varphi \vert^2 ={}& 1+\hat{g}^{ij} \varphi_i \varphi_j \\
={}& 1+\Big(\delta_{ij} - h_{ij} -\frac{4A}{n-2}\rho^{2-n}\delta_{ij}+O(\rho^{2-n})\Big)\Big(2y_i + \frac{2(4-n)}{n-2}A\rho^{2-n}y_i+ O(\rho^{2-n})\Big) \\
&\qquad \cdot \Big(2y_j+ \frac{2(4-n)}{n-2}A\rho^{2-n}y_j+ O(\rho^{2-n}) \Big) \\
={}& 1 + 4\rho^2-8 A \rho^{4-n} + O(\rho^{3-n}) \\
={}& (\rho^2 + \frac{1}{4})(4 -8A \rho^{2-n} +  O(\rho^{1-n})).
\end{align*}
Hence, putting these facts together yields
\begin{align*}
\varphi^{-1}(1+\vert \hat{\nabla} \varphi \vert^2)  &= (1 - \frac{2}{n-2} A \rho^{2-n} + O(\rho^{1-n}))(4-8 A\rho^{2-n} +  O(\rho^{1-n})) \\
&= 4 -\frac{8(n-1)}{n-2} A \rho^{2-n} +  O(\rho^{1-n}).
\end{align*}
This together with Theorem \ref{thm:Green_expan}(2) leads to
\begin{equation}\label{Asymp_Fform}
F = -\frac{8(n-1)}{n-2} A \rho^{2-n} +  O(\rho^{1-n}) = -\frac{2}{n-2} m(\hat{g})\rho^{2-n} +  O(\rho^{1-n})
\end{equation}
and
$$
F^2 + 8 F =  -\frac{16}{n-2} m(\hat{g})\rho^{2-n} +  O(\rho^{1-n}).
$$
By \eqref{Lap_F} and \eqref{Asymp_Fform}, we apply the maximum principle to show that $F \leq 0$ in $\widehat M$.

Define $\widehat M_\rho:=\widehat M \cap \{|y|<\rho\}$ for $\rho \gg 1$. Integrating \eqref{Lap_Aux-fcn} over $\widehat M_\rho$ yields
\begin{align*}
 &8 m(\hat g)|\Sp^{n-1}|+o(1)\\
 =&\frac{1}{2}\int_{\pa \widehat M_\rho}\frac{\pa}{\pa \hat \nu}(F^2+8F) \ud \sigma_{\hat g}\\
=&\int_{\widehat M_\rho} |\hat \nabla F|^2 \ud \mu_{\hat g}+2\int_{\widehat M_\rho} \varphi^{-2}|\hat \nabla \varphi|^2 \vert \mathring{\hat{\nabla}}^2\varphi \vert^2\ud \mu_{\hat g}+\int_{\widehat M_\rho} \varphi^{-1}\hat \Delta F \ud \mu_{\hat g}.
\end{align*}
Finally, letting $\rho \to \infty$ we obtain \eqref{G-M mass identity}. 
\end{proof}

\subsection{Mass and the volume gap}

It remains to prove \eqref{lbd-mass:volume gap} of Theorem \ref{Thm:G-M type mass formula}. We begin with the lemma:

\begin{lemma}\label{lem:key-est}
For $n \geq 5$, there holds
\begin{equation}\label{lbd:geometric}
-\int_M FG \ud \mu_g \geq \frac{n+2}{(n-2)2^{n-4}}(|\Sn|-V).
\end{equation}
\end{lemma}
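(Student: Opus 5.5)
The plan is to turn $-\int_M FG\,\ud\mu_g$ into an explicit constant minus a multiple of $\int_M G^{\frac{n-4}{n-2}}\ud\mu_g$, and then bound that integral from above by its value on $\Sn$ minus a multiple of the volume gap.

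\emph{Step 1: rewrite $F$ in terms of $g$.} Since $\varphi=G^{2/(n-2)}$ and $|\hat\nabla\varphi|^2=\varphi^{-2}|\nabla\varphi|^2_g$, a direct computation gives
\begin{equation*}
FG = G^{\frac{n-4}{n-2}}+\frac{4}{(n-2)^2}G^{-\frac{n}{n-2}}|\nabla G|^2-4G .
\end{equation*}

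\emph{Step 2: evaluate the pieces.} Integrating $L_gG=(n-2)|\Sp^{n-1}|\delta_{\mathsf p}$ over $M$ gives $\int_M G\,\ud\mu_g=4|\Sp^{n-1}|/n$. For the gradient term, write $G^{-\frac{n}{n-2}}\nabla G=-\frac{n-2}{2}\nabla G^{-\frac{2}{n-2}}$ and integrate by parts on $M\setminus B_\varepsilon(\mathsf p)$. The boundary term is $O(\varepsilon^2)$, because $G^{-2/(n-2)}\sim r^2$, $|\nabla G|\sim r^{1-n}$ and $|\partial B_\varepsilon|\sim\varepsilon^{n-1}$; this uses the expansions of Theorem \ref{thm:Green_expan} and Remark \ref{rem:lowdim}. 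Using $\Delta G=\lambda_nG$ away from $\mathsf p$, we get
$$\int_M G^{-\frac{n}{n-2}}|\nabla G|^2\ud\mu_g=\frac{n-2}{2}\lambda_n\int_M G^{\frac{n-4}{n-2}}\ud\mu_g .$$
Combining,
\begin{equation*}
-\int_M FG\,\ud\mu_g=\frac{16|\Sp^{n-1}|}{n}-\frac{n+2}{2}\int_M G^{\frac{n-4}{n-2}}\ud\mu_g .
\end{equation*}
On $\Sn$ with $G=\sigma$ the left side vanishes. Therefore \eqref{lbd:geometric} is equivalent to
$$I(M):=\int_M G^{\frac{n-4}{n-2}}\ud\mu_g\le \int_{\Sn}\sigma^{\frac{n-4}{n-2}}\ud\mu-\frac{2}{(n-2)2^{n-4}}\bigl(|\Sn|-V\bigr).$$
I will first double-check this constant against the explicit one-variable integral on the sphere.

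\emph{Step 3: the upper bound for $I(M)$.} I plan to use the Green representation $\int_M G\,L_g u\,\ud\mu_g=(n-2)|\Sp^{n-1}|u(\mathsf p)$ with an auxiliary radial test function $u=\psi(r)$. Here $\psi$ is built from the sphere model, solving an ODE such as $-\psi''-(n-1)\cot t\,\psi'+\lambda_n\psi=\sigma^{-\frac{2}{n-2}}(t)$ on $(0,\pi)$, so that on the sphere $\sigma L\psi=\sigma^{\frac{n-4}{n-2}}$. Then:
\begin{itemize}
\item[(i)] Laplacian comparison $\Delta r\le(n-1)\cot r$, valid in the barrier/distributional sense across the cut locus, together with a sign condition on $\psi'$, gives an inequality $L_g\psi(r)\ge\sigma^{-\frac{2}{n-2}}(r)$.
\item[(ii)] The two-sided comparison between $G$ and the model controls the replacement of $\sigma^{-2/(n-2)}(r)$ by $G^{-2/(n-2)}$. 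The lower bound $G\ge\sigma(r)$ holds because $\sigma(r)$ is a subsolution, since $L_g\sigma=-\sigma'\tr Z\le0$. The upper control comes from the gradient estimate $F\le0$ of \eqref{est:lbd_F}, which reads $G^{\frac{n-4}{n-2}}+\frac{4}{(n-2)^2}G^{-\frac{n}{n-2}}|\nabla G|^2\le 4G$.
\end{itemize}
The leftover quantity is an integral of a radial function against $\ud\mu_g$. Writing it via the coarea formula as $\int_0^{\pi}h(t)\,V_{\mathsf p}'(t)\,\ud t$ and integrating by parts against the deficit $D(t)$, which is nonnegative and monotone by Bishop–Gromov, produces the gap term $c\,(|\Sn|-V)$. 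The explicit constant comes from the value of the weight near $t=\pi$, where $\sigma(\pi)=2^{2-n}$ appears; this is consistent with the factor $2^{4-n}$ in the statement.

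\emph{Main obstacle.} The hard part is choosing the auxiliary function(s) in Step 3 so that all pointwise comparison signs line up simultaneously. Specifically, I need $\psi'$ of the right sign so that Laplacian comparison goes the correct way, and I need a weight $h$ that is monotone so that the deficit term has the favorable sign. If a single $\psi$ fails, my first fallback is to combine the Green-representation identity with the pointwise inequality $F\le0$ integrated against a radial weight. I would tune that weight, possibly using the two-pole function $\sigma+k\zeta$, so that the $|\nabla G|^2$ contributions cancel and only the volume-comparison term remains.
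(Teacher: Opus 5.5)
\textbf{There is a genuine gap: Step 3, which carries the whole lemma, is a plan rather than an argument, and its natural execution does not reach the stated constant.}

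Your Steps 1 and 2 are correct. The formula for $FG$, the value $\int_M G\ud\mu_g=4|\Sp^{n-1}|/n$ and the $O(\varepsilon^2)$ boundary term all check out. So the lemma is indeed equivalent to $I(M)\le I(\Sn)-\frac{2}{(n-2)2^{n-4}}(|\Sn|-V)$.

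\emph{The natural chain only gives the trivial bound.} Your model function is explicit. Since $\sigma^{-\frac{2}{n-2}}=2-2\cos t$ and $\cos t$ is a first eigenfunction, $\psi=\frac{8}{n(n-2)}-\frac{8}{n(n+2)}\cos t$, and $\psi'\ge0$ as (i) requires. Then (i) together with $G\ge\sigma(r)$ gives $I(M)\le\int_M G\,\sigma(r)^{-\frac{2}{n-2}}\ud\mu_g\le\int_M G\,L_g\psi(r)=(n-2)|\Sp^{n-1}|\psi(0)=I(\Sn)$. This is just $-\int_M FG\ud\mu_g\ge0$, which already follows from $F\le0$.

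\emph{The discarded terms give the wrong constant.} To see a gap you must keep the nonnegative term $\int_M G\,\bigl(L_g\psi(r)-\sigma(r)^{-\frac{2}{n-2}}\bigr)$. Lowering $G$ to $\sigma(r)$ and integrating by parts turns it into $\int_M h(r)\ud\mu_g=-h(\pi)(|\Sn|-V)+\int_0^\pi h'(t)D(t)\ud t$, where $h=(\sigma\psi'\sin^{n-1}t)'/\sin^{n-1}t$.
\begin{itemize}
\item A direct computation gives $-h(\pi)=\frac{2^{5-n}}{n+2}$. This is only $\frac{n-2}{n+2}$ of the required $\frac{2^{5-n}}{n-2}$.
\item For $n\ge5$ the weight $h\propto s^{2-n}-\frac{n+2}{2}s^{4-n}$, with $s=\sin\frac t2$, is not monotone. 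So $\int h'D$ has no sign.
\item The missing part would have to come from $\int_M G\bigl(\sigma(r)^{-\frac{2}{n-2}}-G^{-\frac{2}{n-2}}\bigr)$, which your plan does not quantify.
\end{itemize}
Also, $F\le0$ gives no \emph{upper} control on $G$, contrary to (ii). It gives $\varphi\ge\frac14$. Integrating $|\nabla_g\varphi|\le\varphi\sqrt{4\varphi-1}$ along geodesics again yields only the lower bound $G\ge\sigma(r)$.

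\emph{The missing idea.} The paper works in $(\widehat M,\hat g)$ with a weight that depends on $\varphi$, i.e.\ on $G$, rather than on $r$.
\begin{itemize}
\item Use $\hat\Delta\varphi=\frac n2(F+4)$ and $|\hat\nabla\varphi|^2=(F+4)\varphi-1$. Choose $\Phi$ with $(4t-1)\Phi''+2n\Phi'=t^{-n}$; this is the flat radial potential of the round volume density $(\rho^2+\frac14)^{-n}$. Then $\hat\Delta\Phi(\varphi)=\varphi^{-n}+K(\varphi)F$.
\item Integrate over $\widehat M$. The flux at infinity equals $|\Sn|$ by the expansion of $\varphi$, and $\int_{\widehat M}\varphi^{-n}\ud\mu_{\hat g}=V$. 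This gives the exact identity $|\Sn|-V=\int_M(-FG)H(\varphi)\ud\mu_g$ with $H(t)=-K(t)t^{\frac{n+2}{2}}$.
\item An explicit one-variable computation shows $H$ is decreasing on $[\frac14,\infty)$. Together with $\varphi\ge\frac14$, i.e.\ $G\ge2^{2-n}=\sigma(\pi)$, this gives $H(\varphi)\le H(\frac14)=\frac{(n-2)2^{n-4}}{n+2}$.
\end{itemize}
Your guess that the constant comes from $\sigma(\pi)$ is right. However, it enters as the minimum value of $G$, not through the distance function near $t=\pi$, and no Laplacian or volume comparison is needed.
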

\begin{proof}

Let $\Phi=\Phi(\rho)$ be a smooth function in $(0,\infty)$ satisfying
$$\bigl(\frac{\md^2 }{\md \rho^2}+\frac{n-1}{\rho} \frac{\md}{\md \rho}\bigr)\Phi=(\rho^2+\frac{1}{4})^{-n}, \qquad \lim_{\rho \searrow 0}\rho^{n-1}\frac{\md \Phi}{\md \rho}=1.$$
We let $t=\rho^2+1/4 \in [1/4,\infty)$ and abuse $\Phi(t)=\Phi(\rho(t))$. Then
\begin{equation}\label{ODE:Phi}
(4t-1) \Phi''+2n \Phi'=t^{-n}, \quad \Phi'(1/4)=\frac{4^n}{2n}.
\end{equation}
Thus, we obtain
\begin{equation}\label{eq:dPhi}
\Phi'(t)=\frac{I(\rho)}{2\rho^{n}}, \quad I(\rho)=\int_0^\rho \frac{s^{n-1}}{(\frac{1}{4}+s^2)^n} \ud s.\end{equation}
Clearly, the singularity of $\Phi'(t)$ at $t=1/4$ is removable.

Recall that $\varphi=G^{\frac{2}{n-2}}$.  By \eqref{Lap_varphi}, we know that 
$$\hat  \Delta \varphi=\frac{n}{2}(F+4), \quad |\hat\nabla \varphi |^2=(F+4)\varphi -1.$$
Now for $\Phi=\Phi(\varphi)$, we compute
\begin{equation}\label{eq:Lap_Phi}
\begin{aligned}
\hat \Delta \Phi(\varphi)=&\Phi' \hat \Delta \varphi+\Phi'' |\hat \nabla \varphi|^2\\
=&\frac{n}{2}(F+4)\Phi' +\Phi''\bigl[(F+4)\varphi -1\bigr]\\
=&(4\varphi-1) \Phi''+2n \Phi'+(\varphi \Phi''+\frac{n}{2} \Phi')F\\
=&\varphi^{-n}+K(\varphi) F,
\end{aligned}
\end{equation}
where 
$$K(t)=t \Phi''+\frac{n}{2} \Phi'=\frac{1}{4t-1}\left(t^{1-n}-\frac{n}{2}\Phi'\right).$$
by \eqref{ODE:Phi}.

Notice that $I(\infty)|\Sp^{n-1}|=|\Sn|$. Under asymptotic coordinates $y$, let $\widehat M_t:=\widehat M \cap \{|y|<t\}$ for $t \gg 1$. Integrating \eqref{eq:Lap_Phi} over $\widehat M_t$ yields
\begin{align*}
\int_{\pa B_t(0)}\Phi'\pa_t \varphi \sigma_{\hat g}=\int_{\pa B_t(0)}\pa_t \Phi(\varphi) \ud \sigma_{\hat g}=\int_{\widehat M_t} \bigl(\varphi^{-n} +K(\varphi) F\bigr)\ud \mu_{\hat g}
\end{align*}
Letting $t \to \infty$ and by \eqref{Green-fcn-y} and \eqref{eq:factor} we obtain
\begin{equation}\label{eq:volume gap}
|\Sn|-V=\int_M (-FG)H(\varphi) \ud \mu_g,
\end{equation}
where 
$$H(t)=-K(t) t^{\frac{n+2}{2}}=\frac{1}{4t-1}\left(\frac{n}{2}t^{\frac{n+2}{2}}\Phi'-t^{2-\frac{n}{2}}\right).$$

We now claim that $H(t)$ is monotone decreasing in $[1/4,\infty)$, that is,
$$H(t) \leq H(1/4)=\frac{(n-2)2^{n-4}}{n+2}.$$
Since $-FG \geq 0$ in $\widehat M$, \eqref{eq:volume gap} yields
$$|\Sn|-V\leq H(1/4)\int_{M}(-FG) \ud \mu_g=\frac{(n-2)2^{n-4}}{n+2}\int_{M}(-FG) \ud \mu_g,$$
which implies \eqref{lbd:geometric}.

It remains to prove the above claim. First we compute 
\begin{align*}
H'(t) ={}& \frac{1}{(4t-1)^2} \bigg[ \bigg(\frac{n(n+2)}{4} t^{\frac{n}{2}} \Phi' + \frac{n}{2} t^{1+\frac{n}{2}}\Phi'' - (2 - \frac{n}{2}) t^{1 - \frac{n}{2}} \bigg)(4t-1) - 4 \bigg(\frac{n}{2} t^{1+\frac{n}{2}} \Phi' - t^{2-\frac{n}{2}}\bigg)\bigg] \\
={}& \frac{1}{(4t-1)^2} \bigg[ \frac{n(n+2)}{4} (4t-1)t^{\frac{n}{2}} \Phi' + \frac{n}{2} t^{1+\frac{n}{2}} (t^{-n} - 2n\Phi') \\
&\qquad \qquad  - (2 - \frac{n}{2}) t^{1 - \frac{n}{2}} (4t-1)- 4 \bigg(\frac{n}{2} t^{1+\frac{n}{2}} \Phi' - t^{2-\frac{n}{2}}\bigg)\bigg] \\
={}& \frac{t^\frac{n}{2}}{(4t-1)^2} \bigg(-\frac{n(n+2)}{4} \Phi'  +(2n-4) t^{2-n} + 2 t^{1-n}\bigg) \\
={}& \frac{t^\frac{n}{2}}{2\rho^n(4t-1)^2 } \bigg(-\frac{n(n+2)}{4} I(\rho)  + 2(2n-4) \rho^n (\rho^2+\frac{1}{4})^{2-n} + 4\rho^n (\rho^2+\frac{1}{4})^{1-n}\bigg)\\
:=& \frac{t^\frac{n}{2}}{2\rho^n(4t-1)^2 } F(\rho).
\end{align*}
Since $H'$ and $F(\rho)$ have the same sign, it suffices  to compute
\begin{align*}
\frac{\md}{\md\rho} F ={}& -\frac{n(n+2)}{4} \rho^{n-1}(\rho^2 +  \frac{1}{4})^{-n} + 2n(2n-4) \rho^{n-1} (\rho^2+\frac{1}{4})^{2-n} \\
&+ 4(2-n)(2n-4) \rho^{n+1} (\rho^2+\frac{1}{4})^{1-n} + 4n \rho^{n-1} (\rho^2+\frac{1}{4})^{1-n} + 8(1-n) \rho^{n+1} (\rho^2+\frac{1}{4})^{-n} \\
={}& \bigg[-\frac{n(n+2)}{4} + 2n(2n-4)(\rho^4+ \frac{1}{2} \rho^2 +\frac{1}{16})  \\
& ~+4(2-n)(2n-4) (\rho^4 + \frac{1}{4} \rho^2 ) +4n(\rho^2+\frac{1}{4}) + 8(1-n)\rho^2\bigg]\rho^{n-1}(\rho^2 +  \frac{1}{4})^{-n}\\
={}&4(4-n)(n-2) \rho^{n+3}(\rho^2 +  \frac{1}{4})^{-n}\leq 0, \qquad \mathrm{~~when~~} n\geq 5.
\end{align*}
So, $F(\rho) \leq F(0)=0$ for $\rho \in [0,\infty)$.  This implies that $H(t)$ is monotone decreasing. 
\end{proof}

\begin{proof}[Proof of \eqref{lbd-mass:volume gap} of Theorem \ref{Thm:G-M type mass formula}:]

By \eqref{est:lbd_F} of Theorems \ref{Thm:G-M type mass formula}, we obtain
\begin{equation}\label{est:lbd-G}
F=\varphi^{-1} (1 + \vert \hat{\nabla} \varphi \vert^2) -4 \leq 0 \quad \Rightarrow \quad \varphi \geq  \frac{1}{4}.
\end{equation}
For $n\geq 5$, by \eqref{est:lbd-G} and \eqref{Lap_F} we have
$$\int_{\widehat M} \varphi^{-1}\hat \Delta F \ud \mu_{\hat g}\geq 2^{4-n} \int_{\widehat M} G^{-1}\hat \Delta F \ud \mu_{\hat g}.$$
By \eqref{eq:Lap_G-inverse}, integrating by parts together with asymptotic behavior of $G$ and $F$ yields
$$\int_{\widehat M} G^{-1}\hat \Delta F \ud \mu_{\hat g}=-\lambda_n \int_{M} G F \ud \mu_g.$$
By Lemma \ref{lem:key-est} we obtain
$$\int_{\widehat M} \varphi^{-1}\hat \Delta F \ud \mu_{\hat g}\geq n(n+2)2^{2(3-n)} (|\Sn|-V).$$
This together with \eqref{G-M mass identity} yields
$$m(\hat g) \geq \frac{n(n+2)}{2^{2n-3}|\Sp^{n-1}|} (|\Sn|-V):=\alpha_n (|\Sn|-V),$$
which is exactly \eqref{lbd-mass:volume gap}.
\end{proof}

\section{The sharp diameter-mass inequality}\label{sec:d-m_ineq}
 For a domain $\Omega \subset M$, we define the inner product on $H^1(\Omega,g)$ by
\begin{equation*}
 \mathcal Q_\Omega(u,v)
 =\int_\Omega\left(\langle\nabla u,\nabla v\rangle
                       +\lambda_n uv\right)\ud \mu_{g}
\end{equation*}
and let $ E_\Omega[u]=\mathcal Q_\Omega(u,u)$ for  $u.v \in H^1(\Omega,g)$. Take $\Omega_\varepsilon=M\setminus B_\varepsilon(\mathsf p)$ with
$\varepsilon \ll 1$. In particular, the corresponding  Yamabe energy on a spherical can $B_{\fR}\setminus B_{\varepsilon}(\mathsf p) \subset \Sn$ is
\begin{equation}\label{eq:capenergy}
  E_{\mathrm{can}}(\varepsilon)
 =|\Sp^{n-1}|\int_\varepsilon^{\fR} e_{\fR}(t)\sin^{n-1}t\ud t.
\end{equation}

\begin{lemma}\label{lem:deficitenergy}
Assume the same assumptions as in Theorem \ref{thm:diameter-mass inequality}. Then we have
\begin{equation}\label{eq:truncateddeficit}
  E_{\mathrm{can}}(\varepsilon)
       - E_{\Omega_\varepsilon}[F_{\fR}]=-e_{\fR}(\varepsilon)D(\varepsilon)
             +\int_\varepsilon^{\fR} D(t)(-e_{\fR}'(t))\ud t
\end{equation}
and 
\begin{equation}\label{eq:deficitlimit}
 \lim_{\varepsilon\searrow0}
 \left( E_{\mathrm{can}}(\varepsilon)
       - E_{\Omega_\varepsilon}[F_{\fR}]\right)
 =\int_0^\fR D(t)(-e_\fR'(t))\ud t<\infty.
\end{equation}
\end{lemma}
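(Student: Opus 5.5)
The plan is to compute $E_{\Omega_\varepsilon}[F_{\fR}]$ by the coarea formula with respect to $r$, and then integrate by parts in $t$. The key point is that $F_{\fR}$ is radial. On $B_{\fR}(\mathsf p)\setminus B_\varepsilon(\mathsf p)$ we have $|\nabla F_{\fR}|^2+\lambda_n F_{\fR}^2=e_{\fR}(r)$, since $|\nabla r|=1$ a.e. Outside $B_{\fR}$ we have $F_{\fR}\equiv f_{\fR}(\fR)$.

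First I check that $f_{\fR}'(\fR)=0$, which is the $C^1$ extension claim. Using $\sigma'(t)=-\frac{n-2}{2}\cos(t/2)(2\sin(t/2))^{1-n}$ and $\zeta'(t)=\frac{n-2}{2}\sin(t/2)(2\cos(t/2))^{1-n}$, the condition $f'(\fR)=0$ gives exactly $k_{\fR}=\cot^n(\fR/2)$. Outside $B_{\fR}$ the energy density is therefore the constant $\lambda_n f_{\fR}(\fR)^2=e_{\fR}(\fR)$. Hence
\begin{equation*}
E_{\Omega_\varepsilon}[F_{\fR}]=\int_\varepsilon^{\fR} e_{\fR}(t)\,V_{\mathsf p}'(t)\ud t+e_{\fR}(\fR)\bigl(V-V_{\mathsf p}(\fR)\bigr).
\end{equation*}
Here $V_{\mathsf p}'(t)=|\partial B_t(\mathsf p)|$ a.e. by coarea, and $V_{\mathsf p}$ is absolutely continuous. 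Similarly $E_{\mathrm{can}}(\varepsilon)=\int_\varepsilon^{\fR}e_{\fR}(t)V_n'(t)\ud t$.

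Subtracting, the integrand becomes $e_{\fR}D'$. Integrating by parts on $[\varepsilon,\fR]$ gives
\begin{equation*}
E_{\mathrm{can}}(\varepsilon)-E_{\Omega_\varepsilon}[F_{\fR}]=e_{\fR}(\fR)D(\fR)-e_{\fR}(\varepsilon)D(\varepsilon)-\int_\varepsilon^{\fR}D\,e_{\fR}'\ud t-e_{\fR}(\fR)\bigl(V-V_{\mathsf p}(\fR)\bigr).
\end{equation*}
By the definition of $\fR$, $D(\fR)=V_n(\fR)-V_{\mathsf p}(\fR)=V-V_{\mathsf p}(\fR)$. So the boundary term at $\fR$ cancels the exterior contribution, and this yields \eqref{eq:truncateddeficit}.

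For \eqref{eq:deficitlimit} I need $e_{\fR}(\varepsilon)D(\varepsilon)\to0$ and the integrability of $D\,e_{\fR}'$ near $0$. The obstacle is getting a quantitative rate for $D$. Near $0$, $e_{\fR}(t)\sim (\frac{n-2}{?})^2t^{2-2n}$, more precisely $e_{\fR}\sim c\,t^{2-2n}$ and $e_{\fR}'\sim c' t^{1-2n}$.

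For the rate on $D$, I will use \eqref{eq:angularbounds}. In polar coordinates the volume element is $\sin^{n-1}r\sqrt{\det(g_{\Sp^{n-1}}^{-1}a)}\,\ud r\,\ud\theta$, and $\log\det(g_{\Sp^{n-1}}^{-1}a)=O(r^{2q+4})$. This gives $D(t)=O(t^{n+2q+4})$. Then $e_{\fR}(\varepsilon)D(\varepsilon)=O(\varepsilon^{2q+6-n})$ and $D\,e_{\fR}'=O(t^{2q+5-n})$. Since $q>\lfloor\frac{n-6}{2}\rfloor$ gives $2q+6-n>0$, the boundary term tends to $0$ and the integrand is integrable at $0$.

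For $n=4,5$, Remark \ref{rem:lowdim} supplies the same estimates with $W=O(1)$, i.e. $q=0$. Then $\tr Z=O(r^3)$ and $D=O(t^{n+4})$, which also suffices.

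Away from $0$ the integrand is bounded on $[\delta,\fR]$, so the integral is finite. Monotone convergence then gives the limit, using $D\ge0$ and $-e'_{\fR}\ge0$ from the theorem statement; alternatively, plain integrability suffices.
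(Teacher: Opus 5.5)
Your proof is correct and follows the paper's argument essentially step for step. You apply the coarea formula to the radial function $F_{\fR}$, use $f_{\fR}'(\fR)=0$ and $D(\fR)=V-V_{\mathsf p}(\fR)$ to cancel the exterior contribution after integrating by parts, and then use $D(t)=O(t^{n+2q+4})$ from \eqref{eq:angularbounds} (with $q=0$ for $n=4,5$ via Remark \ref{rem:lowdim}) to kill the boundary term and make $D\,e_{\fR}'$ integrable. There is one harmless slip: the correct formulas are $\sigma'(t)=-(n-2)\cos(t/2)(2\sin(t/2))^{1-n}$ and $\zeta'(t)=(n-2)\sin(t/2)(2\cos(t/2))^{1-n}$, so both of yours carry an extra factor $\tfrac12$, which cancels and still gives $k_{\fR}=\cot^n(\fR/2)$.
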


\begin{proof}
Notice that $F_\fR$ is $C^1$ away from $\mathsf p$; $\nabla F_{\fR}$ is
$f_{\fR}'(r)\nabla r$ where $0<r<\fR$, and zero where $r\geq \fR$, almost everywhere.
The coarea formula gives
\begin{equation}\label{eq:extendedenergy}
\begin{aligned}
  E_{\Omega_\varepsilon}(F_\fR)
 &=\int_\varepsilon^\fR e_\fR(t)\ud V_{\mathsf p}(t)
   +\lambda_n f_\fR(\fR)^2\bigl(V-V_{\mathsf p}(\fR)\bigr)\\
 &=\int_\varepsilon^\fR e_\fR(t)\ud V_{\mathsf p}(t)+e_\fR(\fR)D(\fR),
\end{aligned}
\end{equation}
where we used  $V_n(\fR)=V$ and $f_\fR'(\fR)=0$ in the second equality.
Subtracting \eqref{eq:extendedenergy} from \eqref{eq:capenergy} and
integrating by parts give
\[
 \int_\varepsilon^\fR e_\fR(t)\ud D(t)-e_\fR(\fR)D(\fR)
 =-e_\fR(\varepsilon)D(\varepsilon)
  -\int_\varepsilon^\fR e_\fR'(t)D(t)\ud t.
\]
This gives \eqref{eq:truncateddeficit}.

By Theorem \ref{thm:Green_expan} (2) and Remark \ref{rem:lowdim}, take $q=1+\lfloor\frac{n-6}{2}\rfloor$ when $n\geq 6$ and $q=0$ when $n=4,5$. By \eqref{eq:angularbounds} we obtain
\begin{equation}\label{eq:smallvolumedeficit}
 D(t)=O(t^{\min\{n+2q+4, 3n-4\}}).
\end{equation}
On $\mathbb{S}^n$, we have
\[
 e_\fR(t)=O(t^{2-2n}),\qquad -e_\fR'(t)=O(t^{1-2n}).
\]
Consequently,
\begin{align}
 e_\fR(\varepsilon)D(\varepsilon)
       =&O(\varepsilon^{\min\{2q+6-n, n-2\}})\rightarrow0 \quad \mathrm{~~as~~}\varepsilon \to 0,
 \label{eq:boundarydeficitvanishes}\\
 -D(t)e_\fR'(t)=&O(t^{\min\{2q+5-n, n-3\}})\in L^1(0,\fR).
 \label{eq:deficitintegrability}
\end{align}
This implies \eqref{eq:deficitlimit}.
\end{proof}

For brevity, let $a_{\fR}=2^{2-n}\cot^n(\fR/2)$.
\begin{lemma}\label{lem:poleterms}
There hold $w\in H^1(M,g)$ and as $\varepsilon \to 0$,
\begin{align}
  E_{\Omega_\varepsilon}[G]
       - E_{\mathrm{can}}(\varepsilon)
   &\rightarrow(n-2)|\Sp^{n-1}| (A-a_\fR),
 \label{eq:poleenergy}\\
  \mathcal Q_{\Omega_\varepsilon}(G,w)
   &\rightarrow(n-2)|\Sp^{n-1}| (A-a_\fR).
 \label{eq:polemixed}
\end{align}
\end{lemma}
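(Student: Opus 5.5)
The plan is to reduce every quantity in the statement to boundary integrals over the small geodesic sphere $\partial B_\varepsilon(\mathsf p)$. Those boundary integrals will then be evaluated with the expansion $G=\sigma(r)+A+v$ from Theorem \ref{thm:Green_expan}(2) and Remark \ref{rem:lowdim}, where $|v|+r|\nabla v|+r^2|\nabla^2 v|\to0$, together with the area bounds \eqref{eq:angularbounds}.

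\textbf{Step 1: $w\in H^1(M,g)$.} Near $\mathsf p$ we have $F_\fR=\sigma+k_\fR\zeta$ with $k_\fR\zeta(0)=\cot^n(\fR/2)\,2^{2-n}=a_\fR$. Hence
\[
w=(A-a_\fR)+v+k_\fR\bigl(\zeta(0)-\zeta(r)\bigr).
\]
This function is bounded and satisfies $|\nabla w|=o(r^{-1})+O(r)$. Since $\int_0 r^{-2}r^{n-1}\,dr<\infty$ for $n\ge4$, the gradient is square integrable near $\mathsf p$. Away from $\mathsf p$, $G$ is smooth and $F_\fR$ is Lipschitz, because $f_\fR'(\fR)=0$ gives a $C^1$ gluing and $r$ is Lipschitz. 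Therefore $w\in H^1(M,g)$. In particular, $w\to A-a_\fR$ uniformly on $\partial B_\varepsilon$ as $\varepsilon\to0$.

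\textbf{Step 2: boundary representations.} The function $G$ satisfies $L_gG=0$ on $\Omega_\varepsilon$, and the outward normal of $\Omega_\varepsilon$ on $\partial B_\varepsilon$ is $-\nabla r$. Integrating by parts against $G$ and against the Lipschitz function $F_\fR$ gives
\[
E_{\Omega_\varepsilon}[G]=-\int_{\partial B_\varepsilon}G\,\partial_rG\,d\sigma_g,\qquad
\mathcal Q_{\Omega_\varepsilon}(G,w)=-\int_{\partial B_\varepsilon}w\,\partial_rG\,d\sigma_g .
\]
On the sphere side, $f_\fR$ solves \eqref{eq:Green-fcn-sphere}, so $\bigl(\sin^{n-1}t\,f_\fR f_\fR'\bigr)'=e_\fR\sin^{n-1}t$. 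Using $f_\fR'(\fR)=0$, this yields
\[
E_{\mathrm{can}}(\varepsilon)=-|\Sp^{n-1}|\,f_\fR(\varepsilon)f_\fR'(\varepsilon)\sin^{n-1}\varepsilon .
\]

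\textbf{Step 3: expansions.}
\begin{itemize}
\item \emph{Area of the small sphere.} By \eqref{eq:angularbounds}, the area element of $\partial B_\varepsilon$ equals $\sin^{n-1}\varepsilon\,(1+O(\varepsilon^{2q+4}))\,d\theta$.
\item \emph{The integrand $G\,\partial_rG$.} We have $\partial_rG=\sigma'+\partial_rv$ with $\sigma'(\varepsilon)\sim-(n-2)\varepsilon^{1-n}$. This gives
\[
G\,\partial_rG=\sigma\sigma'+A\sigma'+v\sigma'+(\sigma+A+v)\partial_rv .
\]
\item \emph{The sphere term.} We also have
\[
f_\fR f_\fR'=\sigma\sigma'+k_\fR\zeta(\varepsilon)\sigma'+O(\varepsilon^{2-n}),
\]
with $k_\fR\zeta(\varepsilon)=a_\fR+O(\varepsilon^2)$.
\item \emph{Subtracting.} After multiplying by $\sin^{n-1}\varepsilon$, the difference $E_{\Omega_\varepsilon}[G]-E_{\mathrm{can}}(\varepsilon)$ has leading term
\[
-(A-a_\fR)\,\sigma'(\varepsilon)\,\varepsilon^{n-1}|\Sp^{n-1}|\to(n-2)|\Sp^{n-1}|(A-a_\fR).
\]
\item \emph{Error terms.} The remaining pieces $v\sigma'\varepsilon^{n-1}$, $\sigma\,\partial_rv\,\varepsilon^{n-1}=o(1)$ and $O(\varepsilon)$ all tend to $0$.
\end{itemize}
This proves \eqref{eq:poleenergy}. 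For \eqref{eq:polemixed}, combine $w\to A-a_\fR$ uniformly with $\int_{\partial B_\varepsilon}\partial_rG\,d\sigma_g\to-(n-2)|\Sp^{n-1}|$.

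\textbf{Main obstacle.} The delicate point is the cross term $\sigma\sigma'\times(\text{area correction})$. The divergent quantity $\sigma\sigma'\varepsilon^{n-1}\sim\varepsilon^{3-n}$ is multiplied by the relative area error $O(\varepsilon^{2q+4})$. This product is $O(\varepsilon^{2q+7-n})$. Under the Weyl vanishing condition we have $q\ge(n-5)/2$, and for $n=4,5$ we may take $q=0$. In all cases the exponent $2q+7-n$ is at least $2$, so the term vanishes. This is exactly where the hypothesis \eqref{cond:Weyl-vanishing} and the trace estimate \eqref{eq:e} are needed, and I would check this exponent count carefully. A second check is that the expansion $\partial_rv=o(1/r)$ holds uniformly in $\theta$, which is supplied by \eqref{eq:regular-remainder}.
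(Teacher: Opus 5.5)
Your proposal is correct and follows essentially the same route as the paper. It reduces $E_{\Omega_\varepsilon}[G]$, $\mathcal Q_{\Omega_\varepsilon}(G,w)$ and $E_{\mathrm{can}}(\varepsilon)$ to integrals over $\partial B_\varepsilon(\mathsf p)$, kills the $\sigma\sigma'$ cross term with the area defect from \eqref{eq:angularbounds}, and reads off $(n-2)|\Sp^{n-1}|(A-a_\fR)$ from the $(A-a_\fR)\sigma'$ term. The paper writes $G=f_\fR+w$ on the sphere instead of $G=\sigma+A+v$, but that is only different bookkeeping.

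One correction to the exponent count you flagged: $\sigma\sigma'\varepsilon^{n-1}\sim\varepsilon^{2-n}$, not $\varepsilon^{3-n}$. So the cross term is $O(\varepsilon^{2q+6-n})$, as in the paper. The exponent satisfies $2q+6-n\ge 1$ rather than $\ge 2$ (it equals $1$ for $n=5$), so the term still tends to zero and your conclusion stands.
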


\begin{proof}
Observe that
$$f_\fR(r)=\sigma(r)+a_{\fR}+O(r^2).$$
This together with \eqref{eq:regular-remainder} yields
\begin{equation*}
 w=A-a_\fR+o(1),\qquad r|\nabla w|=o(1),\qquad r^2|\nabla^2w|=o(1)
\end{equation*}
near $\mathsf p$. Away from $\mathsf p$, $F_\fR$ is  $C^1$ and $G$ is smooth.  So,
$w\in H^1(M,g)$.

Since $L_gG=0$ on $\Omega_\varepsilon$, integrating by
parts gives
\begin{align}
  E_{\Omega_\varepsilon}[G]
 &=-\int_{\partial B_\varepsilon(\mathsf p)}G\partial_rG\ud \sigma_{g},
 \label{eq:greenboundary}\\
 \mathcal Q_{\Omega_\varepsilon}(G,w)
 &=-\int_{\partial B_\varepsilon(\mathsf p)}w\partial_rG \ud \sigma_{g}.
 \label{eq:mixedboundary}
\end{align}

Write $\sigma_{\mathsf p}(t)=\mathrm{Area}_{g}(\partial B_t(\mathsf p))$ and
$\sigma_n(t)=|\Sp^{n-1}|\sin^{n-1}t$. Since $f_\fR$ solves the radial equation \eqref{eq:Green-fcn-sphere}, we have
\[
 \bigl(\sigma_n f_{\fR} f_{\fR}'\bigr)'=\sigma_n e_\fR
\]
for all $t$ smaller than the injectivity radius.
By \eqref{eq:capenergy} and $f_\fR'(\fR)=0$, we obtain
\begin{equation}\label{eq:capboundary}
  E_{\mathrm{can}}(\varepsilon)
       =-\sigma_n(\varepsilon)f_\fR(\varepsilon)f_\fR'(\varepsilon).
\end{equation}
Near $\mathsf p$, the estimate \eqref{eq:angularbounds} gives
\begin{equation}\label{eq:smallareadefect}
 \sigma_n(t)-\sigma_{\mathsf p}(t)=O(t^{\min\{n+2q+3, 3n-5\}}).
\end{equation}
This together with $f_\fR f_\fR'=O(t^{3-2n})$ yields
$$\bigl[f_\fR f_\fR'(\sigma_n-\sigma_{\mathsf p})\bigr](t)=O(t^{\min\{2q+6-n, n-2\}}).$$

On $\partial B_\varepsilon(\mathsf p)$, $G=f_\fR+w$. Hence
\eqref{eq:greenboundary} and \eqref{eq:capboundary} give
\begin{align}
 & E_{\Omega_\varepsilon}[G]
       - E_{\mathrm{can}}(\varepsilon)\notag\\
 =&\bigl[f_\fR f_\fR'(\sigma_n-\sigma_{\mathsf p})\bigr](\varepsilon)
   -\int_{\partial B_\varepsilon(\mathsf p)}
       \bigl(wf_\fR'+f_\fR\partial_rw+w\partial_rw\bigr)\ud \sigma_{g}.
 \label{eq:boundaryexpansion}
\end{align}
Notice that $-f_\fR'(\varepsilon)=(n-2)\varepsilon^{1-n}(1+o(1))$ and then
\[
 -\lim_{\varepsilon \searrow 0}\int_{\partial B_\varepsilon(\mathsf p)}wf_\fR'\ud \sigma_{g}
       =(n-2)|\Sp^{n-1}| (A-a_\fR).
\]
Finally, the remaining two terms are bounded by
\[
 C\varepsilon^{2-n}o(\varepsilon^{-1})\varepsilon^{n-1}=o(1),
 \qquad C \varepsilon^{n-1} o(\varepsilon^{-1})=o(1),
\]
respectively. This proves \eqref{eq:poleenergy}. Similarly,
\eqref{eq:mixedboundary} together with
$\partial_r G=f_\fR'+\partial_r w$ gives \eqref{eq:polemixed}.
\end{proof}

\begin{proof}[Proof of Theorem \ref{thm:diameter-mass inequality}.]
For $w=G-F_\fR$, a direct computation yields
\begin{equation}\label{eq:energyalgebra}
\begin{aligned}
  E_{\Omega_\varepsilon}[w]
 ={}& E_{\Omega_\varepsilon}[F_\fR]
       - E_{\Omega_\varepsilon}[G]
       +2 \mathcal Q_{\Omega_\varepsilon}(G,w)\notag\\
 ={}&\bigl( E_{\Omega_\varepsilon}[F_\fR]
                    - E_{\mathrm{can}}(\varepsilon)\bigr)
       -\bigl( E_{\Omega_\varepsilon}[G]
                    - E_{\mathrm{can}}(\varepsilon)\bigr)
       +2 \mathcal Q_{\Omega_\varepsilon}(G,w).
\end{aligned}
\end{equation}
Since $w\in H^1(M,g)$, letting
$\varepsilon\searrow0$ \eqref{eq:energyalgebra} together with Lemmas \ref{lem:deficitenergy}- \ref{lem:poleterms} gives
\[
  E[w]
 =\int_0^\fR D(t)e_\fR'(t)\ud t
       +(n-2)|\Sp^{n-1}|(A-a_\fR).
\]
By \cite[Proposition 2.1]{Viaclovsky} and Theorem \ref{thm:Green_expan}, we know that $4(n-1) A=m(\hat g)$. Finally, putting these facts together yields \eqref{eq:capidentity}.

Next we claim that  $e_\fR'(t)<0$ in $(0,\fR)$. To this end, for $t \in (0,\fR)$ we have
$$\sigma'=\frac{2-n}{2} \sigma \cot \frac{t}{2}, \quad \zeta'=\frac{n-2}{2}\zeta \tan \frac{t}{2}>0, $$
thereby,
$$f_{\fR}'=\sigma'+k_{\fR}\zeta'=\zeta'\bigl(\cot^n \frac{\fR}{2}-\cot^n  \frac{t}{2}\bigr)<0.$$
A direct computation also yields $f_{\fR}''>0$ in $(0,\fR)$. So, we obtain
$$e_{\fR}'=2f_{\fR}'(f_\fR''+\lambda_n f_\fR)<0 \quad \mathrm{~~in~~} (0,\fR).$$

By the above claim and \eqref{eq:volumedeficit}, \eqref{eq:capidentity} implies
\[
 m(\hat g)\ge (n-1)2^{4-n}\cot^n(\fR/2)\ge (n-1)2^{4-n}\cot^n(\diam(M,g)/2).
\]
 If $m(\hat g)=0$, then it forces $\fR=\pi$ and $\diam(M,g)=\pi$.
 
 \medskip
 
 Now suppose the equality of \eqref{eq:massdiam} holds.
Then it follows from \eqref{eq:capidentity} that  $E[w]=0$ and
$\int_0^\fR D(t)e_\fR'(t)\ud t=0$.
Consequently, we obtain $w=0$ and 
\begin{equation*}
 D(t)=0 \quad \mathrm{~~in~~} (0,\fR).
\end{equation*}
In particular, $V_{\mathsf p}(\fR)=V_n(\fR)=V$, the geodesic ball of radius $\fR$ has full
volume, and no point can have distance greater than $\fR$, as such points
would contain a nonempty open set.

By the rigidity of Bishop-Gromov volume comparison, $(M, g)$ is locally isometric to the unit round sphere. Hence $M=\Sn/\Gamma$ for a finite group of
orthogonal transformations acting freely.

Let $\widetilde {\mathsf p}\in\Sn$ lift $\mathsf p$. The lift of the Green function on the universal covering $\Sn$ is
\begin{equation}\label{eq:quotientgreen}
 \widetilde G(x)=
 \sum_{\gamma\in\Gamma}\sigma(\md_{\Sn}(x,\gamma\widetilde {\mathsf p})),
\end{equation}
see for example \cite[Lemma 2.4]{EscSch}.
Since $w=0$, we  have 
\[
G(y)=F_\fR(y)=\sigma(\md_{g}(\mathsf p, y))+k_\fR \zeta(\md_{g}(\mathsf p, y)).
\]
Then for any point $x$ in a small ball around $\widetilde {\mathsf p} \in \Sn$, we set 
\[
t:=\md_{g}(\pi(x), \mathsf p)=\md_{\mathbb{S}^n}(x, \widetilde{\mathsf p}).
\]
Hence,
\[
\widetilde{G}(x)=G(\pi(x))=\sigma(t)+k_\fR \zeta(t).
\]
Since $\sigma(\md_{\Sn}(x,-\widetilde{\mathsf p}))=\sigma(\pi-t)=(2 \cos (t / 2))^{2-n}=\zeta(t)$, we have
\begin{equation}\label{eq:poleidentity}
 \sum_{\gamma\in\Gamma}\sigma(\md_{\Sn}(x,\gamma\widetilde {\mathsf p}))
 =\sigma(\md_{\Sn}(x,\widetilde p))+k_\fR\sigma(\md_{\Sn}(x,-\widetilde {\mathsf p})).
\end{equation}
If $k_\fR=0$, then the orbit $\{\gamma \widetilde {\mathsf p}\}$ has only $\widetilde {\mathsf p}$, so $\Gamma=\{I\}$, which implies that $(M, g)$ is isometric to  $(\Sn,g_{\Sn})$.
If $k_\fR>0$, then the orbit is $\{\widetilde {\mathsf p},-\widetilde {\mathsf p}\}$, its two
coefficients are one, and therefore $|\Gamma|=2$.
In this case, $(M, g)$ is isometric to the round $\RP^n$ with
$\fR=\diam(M,g)=\pi/2$.
\end{proof}

\begin{proof}[Proof of Corollary \ref{Cor:Gap theorem}]
Let $z=\cot(\fR/2)$. Then
\begin{align*}
 |\Sn|-V
 &=|\Sp^{n-1}|\int_R^\pi\sin^{n-1}s \ud s\notag\\
 &=2^n|\Sp^{n-1}|\int_0^z\frac{\tau^{n-1}}{(1+\tau^2)^n}\ud \tau
 \le\frac{2^n|\Sp^{n-1}|}{n}z^n
 \le K_n m(\hat g),
\end{align*}
where
\[
 K_n=\frac{2^{2n-3}|\Sp^{n-1}|}{n}.
\]
Let $\eta_n\in(0,1)$ be the constant in \cite[Theorem 4.4]{Carron}, which also traces back to Anderson \cite[Theorem 1.2]{Anderson}: If $(M^n, g)$ is an Einstein manifold with $\Ric=(n-1)g$, and $\Vol(M,g)>(1-\ve_n) |\Sn|$, then $(M,g)$ is  isometric to $(\Sn, g_{\Sn})$. Note that though Anderson did not state this clearly, one can prove this from Anderson's result, as explained in \cite{Carron}.
Hence, by choosing 
\begin{equation*}
 \delta_n=\frac{\ve_n |\Sn|}{K_n},
\end{equation*}
we conclude that for $m(\hat g)<\delta_n$, $(M,g)$ is isometric to $(\Sn, g_{\Sn})$ and $m(\hat g)=0$. 
\end{proof}

Finally, we study Collins-Sz\'ekelyhidi's Sasaki-Einstein metrics $g_i$ on $\Sp^5$.

\begin{proof}[Proof of Corollary \ref{Cor:mass_blowup}.]
Let $\mathfrak R_i=\fR(M,g_i)\in(0,\pi]$ be the positive number defined by $V_5(\mathfrak R_i)=V_{g_i}$. Then we have
$$V_{g_i}=|\Sp^4|\int_0^{\mathfrak R_i}\sin^4 s\ ds.$$
Since $V_{g_i}\to 0$ as $i\to\infty$, we have $\mathfrak R_i\to 0$. Consequently, $\cot\frac{\mathfrak R_i}{2}\to\infty $. Now by Theorem \ref{thm:diameter-mass inequality}, we have, as $n=5$, 
$$m(\hat g_i)\ge 2\cot^5(\fR_i/2)\to\infty.$$
This concludes the proof.
\end{proof}

\bibliographystyle{amsplain}
\bibliography{references}
\end{document}